\theoremstyle{plain}
\newtheorem{theorem}{Theorem}[section]
\newtheorem{corollary}[theorem]{Corollary}
\newtheorem{lemma}[theorem]{Lemma}
\newtheorem{proposition}[theorem]{Proposition}
\theoremstyle{definition}
\newtheorem{remark}[theorem]{Remark}
\theoremstyle{remark}
\numberwithin{equation}{section}
\title[Bessel processes in the freezing regime]{Central limit theorems
 for multivariate Bessel processes in the freezing regime}
\author{Michael Voit}
\address{Fakult\"at Mathematik, Technische Universit\"at Dortmund,
          Vogelpothsweg 87,
          D-44221 Dortmund, Germany}
\email{michael.voit@math.tu-dortmund.de}
\subjclass[2010]{Primary 60F05; Secondary  60J60, 60B20, 70F10, 82C22, 33C67 }
\keywords{Interacting particle systems, Calogero-Moser-Sutherland models,  central limit theorems,
 Hermite ensembles,  Laguerre ensembles}
\begin{document}
\date{\today}

\begin{abstract} Multivariate Bessel processes  $(X_{t,k})_{t\ge0}$ 
are classified via associated root systems and multiplicity constants $k\ge0$.
They describe the  dynamics of
 interacting particle systems of Calogero-Moser-Suther\-land type.
Recently,  Andraus, Katori, and
 Miyashita derived some weak laws of large numbers for $X_{t,k}$ for fixed times $t>0$ and $k\to\infty$.

In this paper we derive associated central limit theorems for the root systems
 of types A, B and D in an elementary way.
In most cases, the limits will be normal distributions, but in the B-case there are
freezing limits where  distributions associated with the root system A 
or  one-sided normal distributions  on half-spaces appear. Our results are connected
 to central limit theorems of Dumitriu and Edelman for $\beta$-Hermite and $\beta$-Laguerre ensembles.
\end{abstract}

\maketitle

\section{Introduction} 
The dynamics of integrable interacting particle systems of 
Calogero-Moser-Suther\-land type on the real line $\mathbb R$ with $N$ particles 
 can be described by certain time-homogeneous diffusion processes on 
suitable closed subsets of $\mathbb R^N$. These processes are often called (multivariate or interacting) 
Bessel- or Dunkl-Bessel processes; for the general background we refer to
 \cite{CGY}, \cite{GY}, \cite{R1}, \cite{R2}, 
 \cite{RV1}, \cite{RV2} as well as to \cite{An}, \cite{DF}, \cite{DV}.
These processes are classified via root systems and a finite number of  multiplicity parameters
which govern the interactions. We here  consider the  root systems
 of types $A_{N-1}$, $B_N$, and $D_N$.

Let us consider some details of the case $A_{N-1}$ first. Here
 we have a  multiplicity $k\in[0,\infty[$, the processes $(X_{t,k})_{t\ge0}$ 
live on the closed Weyl chamber
$$C_N^A:=\{x\in \mathbb R^N: \quad x_1\ge x_2\ge\ldots\ge x_N\},$$
 the generator of the transition semigroup is 
\begin{equation}\label{def-L-A} Lf:= \frac{1}{2} \Delta f +
 k \sum_{i=1}^N\Bigl( \sum_{j\ne i} \frac{1}{x_i-x_j}\Bigr) \frac{\partial}{\partial x_i}f ,
 \end{equation}
and we assume reflecting boundaries, i.e., the domain of the operator $L$ is
$$D(L):=\{f|_{C_N}: \>\> f\in C^{(2)}(\mathbb R^N), 
\>\>\> f\>\>\text{ invariant under all permutations of coordinates}\}.$$

We are interested in limit theorems for  $(X_{t,k})_{t\ge0}$
for fixed $t>0$ in freezing regimes, i.e., for $k\to\infty$.
For this we recall that by \cite{R1}, \cite{R2}, \cite{RV1}, \cite{RV2}, the 
 transition probabilities are given for $t>0$,  $x\in C_N$, $A\subset C_N$ a Borel set, by
\begin{equation}\label{density-A}
K_t(x,A)=c_k^A \int_A \frac{1}{t^{\gamma_A+N/2}} e^{-(\|x\|^2+\|y\|^2)/(2t)} J_k^A(\frac{x}{\sqrt{t}}, \frac{y}{\sqrt{t}}) 
\cdot w_k(y)\> dy
\end{equation}
with
\begin{equation}\label{def-wk-A-gamma}
w_k^A(x):= \prod_{i<j}(x_i-x_j)^{2k}, \quad\quad \gamma_A=kN(N-1)/2, \end{equation}
and the Macdonald-Mehta-Opdam   constant
\begin{equation}\label{const-A}
 c_k^A:= \Bigl(\int_{C_N^A}  e^{-\|y\|^2/2}\cdot \prod_{i<j} (y_i-y_j)^{2k} \> dy\Bigr)^{-1}
=\frac{N!}{(2\pi)^{N/2}} \cdot\prod_{j=1}^{N}\frac{\Gamma(1+k)}{\Gamma(1+jk)};
\end{equation}
see \cite{O} or \cite{Me}.
 Notice that
$w_k^A$ is homogeneous of degree $2\gamma_A$.
Moreover,
$J_k^A$ is a multivariate Bessel function of type $A$ with multiplicity $k$;
 see e.g. \cite{R1}, \cite{R2} and references there.
For the moment, we do not need much informations about $J_k^A$. We only recapitulate that 
$J_k^A$ is analytic on $\mathbb C^N \times \mathbb C^N $ with
$ J_k^A(x,y)>0$ for $x,y\in \mathbb R^N $, and with
 $J_k^A(x,y)=J_k^A(y,x)$ and $J_k^A(0,y)=1$
for  $x,y\in \mathbb C^N $.

If we start in $0\in \mathbb R^N$, then $X_{t,k}$ has the  density
\begin{equation}\label{density-A-0}
 \frac{c_k}{t^{\gamma+N/2}} e^{-\|y\|^2/(2t)} \cdot w_k(y)\> dy
\end{equation}
on $C_N^A$ for $t>0$, i.e.,
 $X_{t,k}/{\sqrt{tk}}$ has a  density of the form
$$const.(k)\cdot exp\Bigl( k\Bigl(2\sum_{i,j: i<j} \ln(y_i-y_j) -\|y\|^2/2\Bigr)\Bigr)=:
 const.(k)\cdot exp\Bigl( k\cdot W_A(y)\Bigr)$$
which is in particular well-known for $k=1/2, 1,2$ as the distribution of the ordered eigenvalues of
Gaussian orthogonal, unitary, and symplectic ensembles; see e.g. \cite{D}.
For general $k>0$ it is known from the tridiagonal $\beta$-Hermite ensembles of \cite{DE1}.
It is well-known (see \cite{AKM1} and also Section 6.7 of \cite{S}) that  $W_A$ is maximal on $C_N^A$  precisely
 for $y=\sqrt 2\cdot {\bf z}$ where $ {\bf z}\in C_N^A$ is the vector
with the zeros of the classical  Hermite polynomial $H_N$ as entries
where  the $(H_N)_{N\ge 0}$ are orthogonal w.r.t.
 the density  $e^{-x^2}$.
A saddle point argument thus immediately yields that
\begin{equation}\label{LLN-A-start-0}
\lim_{k\to\infty}\frac{X_{t,k}}{\sqrt{2tk}}= {\bf z} \quad\quad(k\to\infty)
\end{equation}
in distribution and thus in  probability whenever the $X_{t,k}$ are defined on a common probability space. 
It was shown in \cite{AKM1} that this even holds when we start in any fixed point $x\in C_N^A$ 
or even with some more or less arbitrary starting distribution.

We  prove a corresponding central limit theorem in an elementary way in Section 2.
This CLT was derived in \cite{DE2} by other methods via an interpretation through tridiagonal matrix models.
 We prove that for starting 
 in  $0\in C_N^A$ and any $t>0$, $X_{t,k} -\sqrt{2kt}\cdot {\bf z}$ converges
in distribution to some centered $N$-dimensional normal distribution with some covariance matrix which again contains
the zeros of $H_N$ as major ingredients; see Theorem \ref{clt-main-a} below. 
The proof  is based on the explicit  density of  $X_{t,k}$ above and
 elementary calculations which involve  the  zeros of $H_N$.
As a byproduct of the CLT we automatically get some determinantal formula for the  zeros of $H_N$ which
 is possibly new.

 We also  derive corresponding CLTs for the 
Bessel processes associated with the root systems $B_N$ and $D_N$. 
In the B-case, the multiplicity $k=(k_1,k_2)$
is 2-dimensional. Motivated by the LLNs in \cite{AKM2} and  \cite{AM} for the B-cases, we 
study central limit theorems for several freezing cases. We in particular study the case
 $(k_1,k_2)=(c\cdot \beta,\beta)$
with $c>0$ fixed and $\beta\to\infty$ in Section 3, but we also shall study the case  where  $k_2>0$ is fixed
and  $k_1\to \infty$ in  Section 4 as well as  the case  $k_1>0$ fixed and $k_2\to \infty$ in Section 6. 
It will turn out that in the first case we obtain again a classical normal distribution
 in the limit where the covariance matrix is formed in terms of the zeros of classical Laguerre polynomials,
 where the index of the polynomials depends on $k_2$. In the second regime,
 the limit distribution has a  density of type A as  in
(\ref{density-A-0}). 
 In the third case we shall get some one-sided normal distributions which live on a certain halfspace.
 Furthermore, Section 5 will be devoted to the root system  $D_N$. 
We point out that some of the limit results are available 
for arbitrary, fixed starting points and not just for the case with start in 0.

The Bessel processes are diffusions on Weyl chambers which satisfy some stochastic 
differential equations; see \cite{GY}, \cite{CGY}. These SDEs are used in \cite{AV}
 to derive  locally uniform  strong laws of large numbers  for $X_{t,k}$ for  $k\to\infty$ with  strong
rates of convergence, whenever the processes start in points of the form $\sqrt k\cdot x$ where $x$
 is some point in the interior of the Weyl chamber. It was possible to derive  a CLT
 in the  B-case for a particular freezing regime for these starting points in \cite{AV}.
Further CLTs for  starting points of the form $\sqrt k\cdot x$ are given in \cite{VW}.

\section{A central limit theorem for the root system $A_{N-1}$}

In this section we derive a CLT for Bessel processes of type A for $k\to\infty$ with
 a $N$-dimensional normal distribution as limit. The centerings as well as the entries of the covariance 
matrices of the limit will be described in terms of the zeros of the classical Hermite polynomials $H_N$. 
This connection is based on the following fact on these zeros, which is originally due to Stieltjes:

\begin{lemma}\label{char-zero-A}
 For $y\in C_N^A$, the following statements  are equivalent:
\begin{enumerate}
\item[\rm{(1)}] The function $W_A(x):=2\sum_{i,j: i<j} \ln(x_i-x_j) -\|x\|^2/2$ is maximal at $y\in C_N^A$;
\item[\rm{(2)}] For $i=1,\ldots,N$:  $\frac{1}{2}y_i= \sum_{j: j\ne i} \frac{1}{y_i-y_j}$;
\item[\rm{(3)}] The vector 
$${\bf z}:=(z_1,\ldots,z_N):=(y_1/\sqrt2, \ldots,y_N/\sqrt2)$$
 consists of
 the ordered zeros of the classical Hermite polynomial $H_N$.
\end{enumerate}
Furthermore, the vector ${\bf z}$ of (3) satisfies for $t>0$,
\begin{equation}\label{potential-z-A}
 -\frac{\|{\bf z}\|^2}{2t}+ 2\sum_{i<j} \ln(z_i-z_j)= -\frac{N(N-1)}{2}(1-\ln t)+ \sum_{j=1}^N j\ln j.
\end{equation}
\end{lemma}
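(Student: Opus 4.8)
The plan is to treat the three equivalences and the closed-form evaluation (\ref{potential-z-A}) separately, since the former is a concavity/ODE argument while the latter reduces to two classical facts about the zeros of $H_N$.

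For the equivalences I would first note that $W_A$ is strictly concave on the interior of $C_N^A$: each term $\ln(x_i-x_j)$ is concave as the composition of $\ln$ with an affine map, and $-\|x\|^2/2$ is strictly concave. Hence $W_A$ has at most one interior critical point, and such a point is automatically the global maximum; this gives (1)$\Leftrightarrow$(2) once I compute $\partial_i W_A(x)=2\sum_{j\ne i}(x_i-x_j)^{-1}-x_i$ and set it to zero. For (2)$\Leftrightarrow$(3) I would invoke Stieltjes: writing $H_N(x)=2^N\prod_i(x-z_i)$ and using the Hermite differential equation $H_N''-2xH_N'+2NH_N=0$ at a zero $z_i$ gives $H_N''(z_i)=2z_i H_N'(z_i)$, while the general logarithmic-derivative identity $H_N''(z_i)/H_N'(z_i)=2\sum_{j\ne i}(z_i-z_j)^{-1}$ yields $z_i=\sum_{j\ne i}(z_i-z_j)^{-1}$; substituting $y_i=\sqrt2\,z_i$ turns this into condition (2).

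For the identity (\ref{potential-z-A}) I would read the left-hand side as the value of the $t$-scaled potential $\Phi_t(x):=-\|x\|^2/(2t)+2\sum_{i<j}\ln(x_i-x_j)$ at its maximizer $x=\sqrt{2t}\,{\bf z}$ singled out by (1)--(3). Substituting and pulling the scaling out of each term separates the $t$-dependence cleanly: the quadratic term contributes $-\|{\bf z}\|^2$, and the logarithmic term contributes $\tfrac{N(N-1)}{2}\ln(2t)+2\sum_{i<j}\ln(z_i-z_j)$. Thus the whole claim reduces to the two $t$-independent evaluations
\[
\sum_{i=1}^N z_i^2=\frac{N(N-1)}{2},\qquad \prod_{i<j}(z_i-z_j)^2=2^{-N(N-1)/2}\prod_{j=1}^N j^{\,j},
\]
after which the competing powers of $2$ cancel and the stated right-hand side appears.

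The second-moment identity is easy: multiplying $z_i=\sum_{j\ne i}(z_i-z_j)^{-1}$ by $z_i$ and summing over $i$, the double sum symmetrizes via $\tfrac{z_i}{z_i-z_j}+\tfrac{z_j}{z_j-z_i}=1$ into $\binom{N}{2}$, giving $\sum_i z_i^2=N(N-1)/2$. The discriminant identity is the main obstacle. I would start from $\prod_{i<j}(z_i-z_j)^2=(-1)^{\binom N2}\prod_i P'(z_i)$ with $P:=2^{-N}H_N$ monic, use $H_N'=2N H_{N-1}$ to express $\prod_i P'(z_i)$ through the resultant $\mathrm{Res}(\hat H_N,\hat H_{N-1})=\prod_i \hat H_{N-1}(z_i)$ of the monic Hermite polynomials up to powers of $2$, and then telescope this resultant with the three-term recurrence $\hat H_N=x\hat H_{N-1}-\tfrac{N-1}{2}\hat H_{N-2}$, which gives the recursion $R_N=(-\tfrac{N-1}{2})^{N-1}R_{N-1}$ with $R_1=1$. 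Solving it produces $\prod_{j=1}^N j^{\,j}$ together with a compensating power of $2$; the delicate part is the careful bookkeeping of the signs and the powers of $2$ arising from the various normalizations ($H_N$ versus monic $\hat H_N$, the resultant symmetry, and the factor $(-1)^{\binom N2}$), which must conspire to leave exactly $2^{-N(N-1)/2}\prod_j j^{\,j}$.
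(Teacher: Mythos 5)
Your argument is correct, but it is genuinely different from what the paper does: the paper gives no proof at all for this lemma, simply citing \cite{AKM1} and Section 6.7 of \cite{S} for the equivalence of (1)--(3) and Appendix D of \cite{AKM1} for (\ref{potential-z-A}). Your version is self-contained, and every step checks out: the strict concavity of $W_A$ on the interior of $C_N^A$ gives (1)$\Leftrightarrow$(2) together with uniqueness of the critical point (which is exactly what is needed to upgrade ``the Hermite zeros satisfy (2)'' to the equivalence (2)$\Leftrightarrow$(3)); the Stieltjes computation via $H_N''-2xH_N'+2NH_N=0$ and $H_N''(z_i)/H_N'(z_i)=2\sum_{j\ne i}(z_i-z_j)^{-1}$ is the standard one; the identity $\sum_i z_i^2=N(N-1)/2$ follows by your symmetrization; and the resultant recursion $R_N=(-\tfrac{N-1}{2})^{N-1}R_{N-1}$ for the monic Hermite polynomials does produce $\prod_{i<j}(z_i-z_j)^2=2^{-N(N-1)/2}\prod_{j=1}^N j^{\,j}$ after the sign and power-of-two bookkeeping (I verified it; the $(-1)^{\binom{N}{2}}$ from the discriminant cancels against the sign accumulated in $R_N$). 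What your approach buys is a transparent derivation that the paper outsources, plus it exposes a defect in the statement itself: read literally, with ${\bf z}$ the ($t$-independent) Hermite zero vector, the left-hand side of (\ref{potential-z-A}) is $\Phi_t({\bf z})$ and the identity fails for $t\ne 1/2$ (e.g.\ for $N=2$, $t=1$ it reads $-\tfrac12+\ln 2=-1+2\ln 2$, which is false). Your reinterpretation of the left-hand side as $\Phi_t(\sqrt{2t}\,{\bf z})$, i.e.\ the value of the $t$-scaled potential at its maximizer, is the version that holds for all $t>0$ and agrees with the printed formula exactly at $t=1/2$ --- which is the only value the paper ever uses (in the proof of Theorem \ref{clt-main-a}). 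It would be worth stating explicitly in your write-up that you are proving this corrected form.
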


\begin{proof} For the equivalence of (1)-(3)
see \cite{AKM1} or  Section 6.7 of \cite{S}. For (\ref{potential-z-A}) we
 refer to appendix D and the comments between Eqs.~(58) and (59) in \cite{AKM1}.
\end{proof}

Using the zeros of $H_N$ we now turn to the main result of this section:

\begin{theorem}\label{clt-main-a}
Consider the  Bessel processes $(X_{t,k})_{t\ge0}$ of type $A_{N-1}$ on $C_N^A$ for $k\ge0$ with start in $0\in C_N^A$ .
Then
$$\frac{X_{t,k}}{\sqrt t} -  \sqrt{2k}\cdot {\bf z}$$
converges for $k\to\infty$ to the centered $N$-dimensional distribution $N(0,t\cdot \Sigma)$
with the regular covariance matrix $\Sigma$ with $\Sigma^{-1}=(s_{i,j})_{i,j=1,\ldots,N}$ with
\begin{equation}\label{covariance-matrix-A}
s_{i,j}:=\left\{ \begin{array}{r@{\quad\quad}l}  1+\sum_{l\ne i} (z_i-z_l)^{-2} & \text{for}\quad i=j \\
   -(z_i-z_j)^{-2} & \text{for}\quad i\ne j  \end{array}  \right.  . 
\end{equation}
\end{theorem}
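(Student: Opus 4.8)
The plan is to read off the law of $X_{t,k}$ from the explicit density (\ref{density-A-0}) and to carry out a Laplace (saddle point) expansion around the maximizer of $W_A$ provided by Lemma \ref{char-zero-A}. First I would remove $t$ and $k$ by scaling. Substituting $y=\sqrt t\,z$ in (\ref{density-A-0}) and using that $w_k^A$ is homogeneous of degree $2\gamma_A$ shows that $X_{t,k}/\sqrt t$ has the $t$-independent density $c_k^A\,e^{-\|z\|^2/2}\prod_{i<j}(z_i-z_j)^{2k}$ on $C_N^A$; a further substitution $z=\sqrt k\,u$ rewrites this as $\tilde c_k\,\exp(k\,W_A(u))$ with the explicit constant $\tilde c_k=c_k^A\,k^{N/2}\,k^{kN(N-1)/2}$. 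Hence, writing $U_k:=X_{t,k}/\sqrt{tk}$ and $u^*:=\sqrt2\,{\bf z}$, the object of interest is $V_k:=X_{t,k}/\sqrt t-\sqrt{2k}\,{\bf z}=\sqrt k\,(U_k-u^*)$, i.e.\ the fluctuation of $U_k$ around $u^*$ at the scale $k^{-1/2}$.

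Next I would compute the density of $V_k$. The substitution $u=u^*+v/\sqrt k$ gives $h_k(v)=c_k^A\,k^{kN(N-1)/2}\,\exp\!\big(k\,W_A(u^*+v/\sqrt k)\big)$, and the exponent is expanded by Taylor's formula. Since $u^*$ is a critical point of $W_A$ by Lemma \ref{char-zero-A}(2), the linear term vanishes and $k\,W_A(u^*+v/\sqrt k)=k\,W_A(u^*)+\tfrac12 v^\top H v+O(k^{-1/2})$ for fixed $v$, where $H=D^2W_A(u^*)$. A direct differentiation of $W_A(u)=2\sum_{i<j}\ln(u_i-u_j)-\|u\|^2/2$ yields $\partial_i\partial_j W_A=2(u_i-u_j)^{-2}$ for $i\ne j$ and $\partial_i^2 W_A=-1-2\sum_{l\ne i}(u_i-u_l)^{-2}$; evaluating at $u^*_i=\sqrt2\,z_i$, where $(u^*_i-u^*_j)^2=2(z_i-z_j)^2$, gives precisely $-H=(s_{i,j})$ with $s_{i,j}$ as in (\ref{covariance-matrix-A}). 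Thus $h_k(v)$ converges pointwise to a positive multiple of $\exp(\tfrac12 v^\top H v)=\exp(-\tfrac12 v^\top\Sigma^{-1}v)$.

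To finish I would first record that $-H=\Sigma^{-1}$ is strictly diagonally dominant with positive diagonal: in each row the diagonal entry exceeds the sum of the absolute values of the off-diagonal entries by exactly $1$. Hence $-H$ is positive definite, $\Sigma$ is regular, and the pointwise limit of $h_k$ is an integrable multiple of a Gaussian density. Because $u^*$ lies in the interior of $C_N^A$ (the zeros of $H_N$ are simple and pairwise distinct), the admissible set $\sqrt k\,(C_N^A-u^*)$ for $v$ increases to all of $\mathbb R^N$, so the chamber constraint is lost in the limit. An application of Scheffé's lemma then upgrades the pointwise convergence of the probability densities $h_k$ to convergence in distribution, which forces the limiting constant to be the Gaussian normalization and yields $V_k\Rightarrow N(0,\Sigma)$; the factor $t$ in the statement appears through the scaling $X_{t,k}-\sqrt{2kt}\,{\bf z}=\sqrt t\,V_k$. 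As a bonus, matching the explicit prefactor $c_k^A\,k^{kN(N-1)/2}e^{kW_A(u^*)}$, evaluated by Stirling's formula from (\ref{const-A}) together with the value (\ref{potential-z-A}) of $W_A$ at $u^*$, against $(2\pi)^{-N/2}\sqrt{\det(s_{i,j})}$ reproduces the determinantal formula for $\det(s_{i,j})$ announced in the introduction.

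The main difficulty is analytic rather than algebraic. The pointwise Taylor estimate only controls $h_k$ for each fixed $v$; to apply Scheffé one must rule out any escape of mass toward the boundary of $C_N^A$, where the logarithmic terms of $W_A$ diverge, as well as toward infinity. I therefore expect the crux to be a uniform domination bound, valid for all large $k$, showing that the integral of $\exp(k\,W_A)$ over the complement of a small fixed neighbourhood of $u^*$ is negligible compared with its contribution near $u^*$. The strictness of the maximum of $W_A$ at $u^*$ together with the quadratic decay $-\|u\|^2/2$ at infinity should make such a bound elementary, but it is the step that requires the most care.
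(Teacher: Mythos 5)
Your proposal follows essentially the same route as the paper: shift the explicit density by the maximizer $\sqrt{2k}\,{\bf z}$, expand the logarithmic terms to second order, use the Stieltjes relations of Lemma \ref{char-zero-A}(2) to kill the linear term, read off $\Sigma^{-1}$ from the Hessian of $W_A$ at $\sqrt 2\,{\bf z}$ (your Hessian computation reproduces \eqref{covariance-matrix-A} exactly and is equivalent to the paper's term-by-term expansion), and then conclude by dominated convergence, with the normalization coming for free because the $f_k^A$ are probability densities --- which also yields Corollary \ref{corr-determinant}, as you note. The one step you leave open, the uniform domination, is exactly where the paper is slicker than the generic Laplace-method localization you sketch: writing each term with its Lagrange remainder, $\ln(1+u)=u-\tfrac{u^2}{2}w$ with $w\in[0,1]$, gives $\ln(1+u)\le u$, and after summing, the linear parts cancel against $-\sqrt{2k}\langle y,{\bf z}\rangle$ by the same Stieltjes identity, leaving the $k$-independent bound $h_k(y)\le e^{-\|y\|^2/2}$ valid on the whole shifted chamber; this makes dominated convergence immediate and avoids any separate analysis near the boundary of $C_N^A$ or at infinity. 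With that bound supplied, your argument is complete and correct.
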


\begin{proof}
We first observe that by the definition of the transition kernels $K_t$ of our Bessel processes in (\ref{density-A}),
 the $K_t$ admit the same
space-time-scaling as Brownian motions, i.e., for all $t>0$, $x\in C_N^A$, and all Borel sets $A\subset C_N^A$,
$K_t(x,A)=K_1(\sqrt t\cdot x, \sqrt t\cdot A)$. We thus may assume that $t=1$ in the proof.

$X_{1,k}$ has the  density
$$c_k^A e^{-\|y\|^2/2}\cdot exp\Bigl( 2k \sum_{i<j}\ln(y_i-y_j)\Bigr)$$
on $C_N^A$. Hence, $X_{1,k}-\sqrt{2k}\cdot{\bf z}$ has the Lebesgue density
\begin{align}\label{a-density-detail}
f_k^A(y):=c_k^A & \cdot exp\Bigl(-\|y +\sqrt{2k}\cdot{\bf z}\|^2/2+2k \sum_{i<j}\ln\bigl(y_i-y_j+\sqrt{2k}(z_i-z_j)\bigl) \Bigr)
 \\
=c_k^A &\cdot exp\Bigl(-\|y\|^2/2-\sqrt{2k} \langle y,{\bf z}\rangle- k\|{\bf z}\|^2
+2k \sum_{i<j}\ln(\sqrt{2k}(z_i-z_j)) \Bigr)\times\notag\\
&\times  exp\Bigl(
2k \sum_{i<j}\ln\bigl(1+ \frac{y_i-y_j}{\sqrt{2k}(z_i-z_j)}\bigr)\Bigr) \notag
\end{align}
on the shifted cone $C_N^A-\sqrt{2k}\cdot{\bf z}$ with $f_k^A(y)=0$ otherwise on $\mathbb R^N$.
We now split this formula  into two parts
$$f_k^A(y)=\tilde c_k\cdot h_k(y),$$
where $h_k$ depends on $y$ and
 the remainder $\tilde c_k$ is   constant w.r.t.~$y$.
This constant term is 
\begin{align}
\tilde c_k:=& c_k^A  e^{-k\|{\bf z} \|^2}\cdot exp\Bigl(2k \sum_{i<j}\ln(\sqrt{2k}(z_i-z_j))\Bigr)\notag\\
=& c_k^A  exp\Bigl(-k\bigl( \|{\bf z} \|^2-2 \sum_{i<j}\ln(z_i-z_j)\bigr)\Bigr)\cdot (2k)^{kN(N-1)/2}\notag\\
=& c_k^A exp\Bigl(-k \frac{N(N-1)}{2}(1+\ln 2)+k\sum_{j=1}^N j\ln j\Bigr)\cdot (2k)^{kN(N-1)/2}
\notag\\
=& c_k^A  (k/e)^{kN(N-1)/2}\cdot\prod_{j=1}^N j^{kj}.\notag
\end{align}
Notice that the third $=$ above follows from (\ref{potential-z-A}) for $t=1/2$. Hence, by (\ref{const-A}),
\begin{equation}\label{a-density-constants}
\tilde c_k(x)=\frac{N!}{(2\pi)^{N/2}} \cdot\prod_{j=1}^{N}\frac{\Gamma(1+k)}{\Gamma(1+jk)}\cdot
   (k/e)^{kN(N-1)/2}\cdot\prod_{j=1}^N j^{kj}.
\end{equation}
 Stirling's formula $\Gamma(k+1)\sim \sqrt{2\pi k}(k/e)^k$ and elementary calculations now lead to 
\begin{equation}\label{a-density-constants-limit}
\lim_{k\to\infty}\tilde c_k=\frac{\sqrt{N!}}{(2\pi)^{N/2}}.
\end{equation}

We next turn to the factor $h_k(y)$; it is given by
$$h_k(y):=  exp\Bigl(-\|y\|^2/2 -\sqrt{2k} \langle y,{\bf z}\rangle +
2k \sum_{i<j}\ln\bigl(1+ \frac{y_i-y_j}{\sqrt{2k}(z_i-z_j)}\bigr)\Bigr)$$
By the power series  of $\ln(1+x)$,
\begin{equation}\label{ln-expansion}
\ln\bigl(1+ \frac{y_i-y_j}{\sqrt{2k}(z_i-z_j)}\bigr) = \frac{y_i-y_j}{\sqrt{2k}(z_i-z_j)}
 -\frac{(y_i-y_j)^2}{4k(z_i-z_j)^2} + O(k^{-3/2}).
\end{equation}
Furthermore, by part (2) of Lemma \ref{char-zero-A},
\begin{equation}\label{zero-equation-a}
-\sqrt{2k} \langle y,{\bf z}\rangle+\sqrt{2k} \sum_{i<j} \frac{y_i-y_j}{z_i-z_j}=
\sqrt{2k}\sum_{i=1}^N y_i\bigl(-z_i+ \sum_{j: \> j\ne i} \frac{1}{z_i-z_j}\bigr)=0.
\end{equation}
Therefore, 
\begin{equation}\label{density-a-limit-1}
h_k(y)= exp\Bigl(-\|y\|_2^2/2-\frac{1}{2}\sum_{i<j}\frac{(y_i-y_j)^2}{(z_i-z_j)^2} + O(k^{-1/2})\Bigr).
\end{equation}
Now let $f\in C_b(\mathbb R^N)$ be a bounded continuous function. We conclude from
(\ref{a-density-detail}), (\ref{a-density-constants-limit}),(\ref{density-a-limit-1}) that
\begin{align}\label{density-a-limit-2}
\lim_{k\to\infty}\int_{\mathbb R^N} f(y)\cdot f_k^A(y)\> dy =& \lim_{k\to\infty} \tilde c_k \int_{\mathbb R^N}  f(y)\cdot h_k(y)\> dy
 \\
=& \frac{\sqrt{N!}}{(2\pi)^{N/2}} \cdot
\int_{\mathbb R^N} f(y) e^{-\|y\|^2/2} exp\Bigl(-\frac{1}{2}\sum_{i<j}\frac{(y_i-y_j)^2}{(z_i-z_j)^2}\Bigr) \> dy.
\notag
\end{align}
For this we have to check that we may apply  dominated convergence. For this we again consider
 the Taylor polynomial  of $\ln(1+x)$ and notice that by the Lagrange remainder,
\begin{equation}\label{ln-expansion-remainder}
\ln\bigl(1+ \frac{y_i-y_j}{\sqrt{2k}(z_i-z_j)}\bigr) = \frac{y_i-y_j}{\sqrt{2k}(z_i-z_j)}
 -\frac{(y_i-y_j)^2}{4k(z_i-z_j)^2}\cdot w
\end{equation}
with some $w\in[0,1]$.  This implies readily  that 
we could apply  dominated convergence  in (\ref{density-a-limit-2}).

On the other hand, Eq.~(\ref{density-a-limit-2}) says that the probability measures with the densities $f_k^A$
tend weakly to the measure with Lebesgue density
\begin{equation}
\frac{\sqrt{N!}}{(2\pi)^{N/2}}\cdot
 e^{-\|y\|^2/2}\cdot exp\Bigl(-\frac{1}{2}\sum_{i<j}\frac{(y_i-y_j)^2}{(z_i-z_j)^2}\Bigr),
\end{equation}
which is automatically a probability measure. This measure is  necessarily the normal distribution claimed
 in the  theorem up to scaling such that we necessarily have the correct  normalization. This completes the proof.
\end{proof}

Notice that the final arguments in the proof  about the correct normalizations above  automatically lead to 
the following remarkable result for the zeros of the Hermite polynomial $H_N$:

\begin{corollary}\label{corr-determinant}
For each $N\in\mathbb N$ consider the ordered zeros $z_1\ge \ldots\ge z_N$ of the $N$-th Hermite polynomial $H_N$.
Form the matrix $S:=(s_{i,j})_{i,j=1,\ldots,N}$ with
\begin{equation}\label{covariance-matrix-A-2}
s_{i,j}:=\left\{ \begin{array}{r@{\quad\quad}l}  1+\sum_{l\ne i} (z_i-z_l)^{-2} & \text{for}\quad i=j \\
   -(z_i-z_j)^{-2} & \text{for}\quad i\ne j  \end{array}  \right.  . 
\end{equation}
Then $det\> S=N!$.
\end{corollary}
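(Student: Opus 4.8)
The plan is to read off $\det S$ from the normalization of the limiting Gaussian density already produced in the proof of Theorem \ref{clt-main-a}, rather than computing the determinant by hand. The key observation is that the matrix $S$ defined in (\ref{covariance-matrix-A-2}) is exactly the precision matrix (inverse covariance) of that limit distribution.

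First I would verify that the quadratic form in the exponent of the limiting density equals $y^T S y$. Expanding $\sum_{i<j}(y_i-y_j)^2/(z_i-z_j)^2$ and collecting coefficients, the coefficient of $y_i^2$ is $1+\sum_{l\ne i}(z_i-z_l)^{-2}=s_{i,i}$ (the $1$ coming from the $\|y\|^2$ term), while the coefficient of the monomial $y_iy_j$ for $i\ne j$ is $-2(z_i-z_j)^{-2}=2s_{i,j}$. Hence
$$\|y\|^2+\sum_{i<j}\frac{(y_i-y_j)^2}{(z_i-z_j)^2}=y^TSy,$$
so the limiting density exhibited in the proof of Theorem \ref{clt-main-a} takes the form
$$\frac{\sqrt{N!}}{(2\pi)^{N/2}}\exp\Bigl(-\tfrac12\, y^TSy\Bigr).$$

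Next I would invoke the fact, established inside that proof, that this is a genuine probability density on $\mathbb R^N$. Since the quadratic form dominates $\|y\|^2$, the symmetric matrix $S$ is positive definite, hence invertible, and the above is the density of the centered Gaussian $N(0,S^{-1})$. But the standard normalizing constant of $N(0,S^{-1})$ is $(2\pi)^{-N/2}(\det S)^{1/2}$. Comparing the two expressions for this constant gives $(\det S)^{1/2}=\sqrt{N!}$, i.e. $\det S=N!$.

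I expect no serious obstacle here: the determinant identity is a formal byproduct of matching normalizations, precisely the point flagged in the remark preceding the corollary. The only item requiring care is confirming positive definiteness of $S$, so that $N(0,S^{-1})$ is well defined and $\det S>0$; this is immediate because $y^TSy\ge\|y\|^2>0$ for $y\ne 0$, and it also follows from the asserted regularity of $\Sigma$ in Theorem \ref{clt-main-a}.
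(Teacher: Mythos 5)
Your argument is correct and is exactly the route the paper takes: the corollary is stated as an automatic byproduct of the normalization $\sqrt{N!}/(2\pi)^{N/2}$ of the limiting density in the proof of Theorem \ref{clt-main-a}, identified as the Gaussian $N(0,S^{-1})$ with normalizing constant $(2\pi)^{-N/2}(\det S)^{1/2}$. Your explicit verification that the quadratic form equals $y^TSy$ and that $S$ is positive definite only makes the paper's implicit reasoning more careful.
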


\begin{remark}
As mentioned in the introduction, Theorem \ref{clt-main-a} was derived in \cite{DE2}
 via the tridiagonal matrix models for $\beta$-Hermite ensembles
of Dumitriu and Edelman in \cite{DE1}. In fact, in Theorem  3.1
of \cite{DE2}, 
  Dumitriu and Edelman  obtain the CLT  above
with a direct, but quite complicated formula for the covariance matrix $\Sigma$ of the limit
 in terms of the zeros of $H_N$ combined with
 the Hermite polynomials $H_l$ ($l=1,\ldots,N$). It is quite unclear how the expression for 
 $\Sigma$ in \cite{DE2} 
corresponds to our formula for $\Sigma^{-1}$ above, i.e., the equality of these matrices may be seen as a further
corollary from Theorem \ref{clt-main-a}.
\end{remark}

\begin{remark}
One might try to extend the preceding proof to the case where the
 processes $(X_{t,k})_{t\ge0}$ start in some  starting point
 $x\in C_N^A$ with $x\ne0$. If $x$ has the form $x=c(1,\ldots,1)$, then
 $\frac{X_{t,k}}{\sqrt t} -  \sqrt{2k}\cdot {\bf z}-x$ again tends to
 $N(0,t\cdot \Sigma)$ in distribution with $\Sigma$ as above in the theorem.

This follows easily from   Theorem \ref{clt-main-a} and the fact that the kernels 
 $K_t$ are partially translation invariant in the sense that
\begin{equation}\label{translation-invariance-A}
K_t(x+c(1,\ldots,1), A+c(1,\ldots,1))= K_t(x,A) \quad\quad \text{for}\>\> c,t>0, \> x\in C_N^A,\> A\subset C_N^A.
\end{equation}
In fact this is a consequence from the well-known fact that the Bessel functions of type A satisfy
\begin{equation}\label{bessel-reduction-A}
J_k^A(x,y)=e^{N \bar x \cdot \bar y}\cdot\tilde J_k^A(\pi_N(x),\pi_N(y)) \quad\quad(x,y\in \mathbb R^N)
\end{equation}
with $\bar x:=\frac{1}{N}(x_1+\ldots+x_N)$ and with
 the orthogonal projection $\pi_N$ from $\mathbb R^N$ onto the orthogonal complement 
  ${\bf 1}^\bot:=(1,\ldots,1)^\bot$ of $\mathbb R\cdot(1,\ldots,1)$
w.r.t.~the standard scalar product on $\mathbb R^N$ and with the Bessel function $\tilde J_k$
 which is associated with the reduced irreducible root system of type $A_{N-1}$  on
 ${\bf 1}^\bot\equiv \mathbb R^{N-1}$. For the details we refer to \cite{BF}.

Moreover, for $N=2$, the reduced Bessel function $\tilde J_k$ is a classical (modified) one-dimensional 
Bessel function. 
This implies that here for all $x,y\in \mathbb R^{N-1}$, the limit
\begin{equation}\label{reduction-bessel-limit-A}
\lim_{k\to\infty}\tilde J_k^A(x,\sqrt k \cdot y)=:\phi(x,y)>0
\end{equation}
exists locally uniformly. 
With this limit, the proof of the CLT above can be extended to arbitrary starting points $x\in C_N^A$
 for $N=2$ where the limit does not depend on $x$. Such a result will also appear
 for some limit for the root systems $B_N$ with $N\ge1$ in Section 3.

It is unclear whether for
the A-case and $N\ge3$ a corresponding result to (\ref{reduction-bessel-limit-A}) holds.
 Possibly,
  (\ref{reduction-bessel-limit-A}) 
can be checked via induction on $N$ via the recursive integral representation of 
Amri \cite{A} for $ J_k^A$ which
 is a consequence of the integral representation of  Okounkov and Olshanski for Jack polynomials in \cite{OO}.
\end{remark}

\section{A central limit theorem for the root system $B_N$}

In this section we derive a first CLT for Bessel processes of type B. We recapitulate that
in the case $B_N$,  we have 2  multiplicities $k_1,k_2>0$, the processes live on 
$$C_N^B:=\{x\in \mathbb R^N: \quad x_1\ge x_2\ge\ldots\ge x_N\ge0\},$$
 the generator of the transition semigroup is 
\begin{equation}\label{def-L-B} Lf:= \frac{1}{2} \Delta f +
 k_2 \sum_{i=1}^N \sum_{j\ne i} \Bigl( \frac{1}{x_i-x_j}+\frac{1}{x_i+x_j}  \Bigr)
 \frac{\partial}{\partial x_i}f 
\quad + k_1\sum_{i=1}^N\frac{1}{x_i}\frac{\partial}{\partial x_i}f, \end{equation}
and we again assume reflecting boundaries. Similar to  the A-case in the introduction,  we have the transition 
probabilities
\begin{equation}\label{density-general-b}
K_{t,k}(x,A)=c_k^B \int_A \frac{1}{t^{\gamma_B+N/2}} e^{-(\|x\|^2+\|y\|^2)/(2t)} J_k^B(\frac{x}{\sqrt{t}}, \frac{y}{\sqrt{t}}) 
\cdot w_k^B(y)\> dy
\end{equation}
with
\begin{equation}\label{def-wk-b}
w_k^B(x):= \prod_{i<j}(x_i^2-x_j^2)^{2k_2}\cdot \prod_{i=1}^N x_i^{2k_1},\end{equation}
 $ \gamma_B=k_2N(N-1)+k_1N$, and with the Macdonald-Mehta-Opdam-type normalization
\begin{align}\label{const-b}
 c_k^B:=& \Bigl(\int_{C_N^B}  e^{-\|y\|^2/2} w_k^B(y) \> dy\Bigr)^{-1} \\
=&\frac{N!}{2^{N(k_1+(N-1)k_2-1/2)}} \cdot\prod_{j=1}^{N}\frac{\Gamma(1+k_2)}{\Gamma(1+jk_2)\Gamma(\frac{1}{2}+k_1+(j-1)k_2)};
\notag\end{align}
see \cite{O}. Again
$w_k^B$ is homogeneous of degree $2\gamma_B$, and
$J_k^B$ is a multivariate Bessel function of type  $B$ with multiplicities $k:=(k_1,k_2)$.
 
We now study CLTs fo several freezing regimes in this section as well as in Sections 4 and 6.
 We here start with the case 
$(k_1,k_2)=(\nu\cdot\beta,\beta)$ with $\nu>0$ fixed and $\beta\to\infty$. Laws of large numbers in this case
can be found in in \cite{AKM2}, \cite{AV} where there in the limit now
 the zeros of the classical Laguerre polynomials
 $L_N^{(\nu-1)}$ appear. We recapitulate that the  $L_N^{(\nu-1)}$ are orthogonal  w.r.t. the density $e^{-x}\cdot x^{\nu-1}$ on $]0,\infty[$
for $\nu>0$.
We need the following known facts about the zeros of $L_N^{(\nu-1)}$.

\begin{lemma}\label{char-zero-B1}
Let $\nu>0$. For $r\in C_N^B$, the following statements  are equivalent:
\begin{enumerate}
\item[\rm{(1)}] The function 
$$W_B(y):=2\sum_{ i<j} \ln(y_i^2-y_j^2) +2\nu \sum_{i}\ln y_i-\|y\|^2/2$$
 is maximal at $r\in C_N^B$;
\item[\rm{(2)}] For $i=1,\ldots,N$, $r=(r_1,\ldots,r_N)$ satisfies
$$\frac{1}{2}r_i= \sum_{j: j\ne i} \frac{2r_i}{r_i^2-r_j^2} +\frac{\nu}{r_i}=\sum_{j: j\ne i} \Bigl(
\frac{1}{r_i-r_j} +\frac{1}{r_i+r_j}\Bigr) +\frac{\nu}{r_i} ;$$ 
\item[\rm{(3)}] If $z_1^{(\nu-1)}\ge\ldots\ge z_N^{(\nu-1)}$ are the ordered zeros of $L_N^{(\nu-1)}$, then 
\begin{equation}\label{y-max-B1}
2(z_1^{(\nu-1)},\ldots, z_N^{(\nu-1)})= (r_1^2, \ldots, r_N^2).
\end{equation}
\end{enumerate}
The vector $r$ of (1)-(3) satisfies
\begin{align}\label{equality-F-B}
-\frac{1}{2}\|r\|^2 +&\nu\sum_{j=1}^N \ln r_j^2 + 2\sum_{i<j}\ln ( r_i^2 -r_j^2)= \\
&=N(N+\nu-1)(-1+\ln 2)+ \sum_{j=1}^N j\ln j + \sum_{j=1}^N(\nu +j-1) \ln(\nu +j-1)
\notag
\end{align}
\end{lemma}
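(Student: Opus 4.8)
The plan is to prove Lemma \ref{char-zero-B1} in close analogy with the structure of Lemma \ref{char-zero-A}, treating the equivalence of (1)--(3) and the identity (\ref{equality-F-B}) as two separate tasks. For the equivalences, I would first show (1) $\Leftrightarrow$ (2) by computing the gradient of $W_B$ on the interior of $C_N^B$ and setting it to zero. Differentiating $W_B(y)=2\sum_{i<j}\ln(y_i^2-y_j^2)+2\nu\sum_i\ln y_i-\|y\|^2/2$ with respect to $y_i$ gives the critical-point equations
\begin{equation}\label{critical-B}
\frac{1}{2}y_i = \sum_{j\ne i}\frac{2y_i}{y_i^2-y_j^2}+\frac{\nu}{y_i},
\end{equation}
which is exactly condition (2) after the algebraic identity $\frac{2r_i}{r_i^2-r_j^2}=\frac{1}{r_i-r_j}+\frac{1}{r_i+r_j}$. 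Since $W_B$ is strictly concave on the interior of $C_N^B$ (the logarithmic terms are concave in the relevant variables and $-\|y\|^2/2$ is strictly concave) and tends to $-\infty$ at the boundary, the maximum is interior and unique, so the critical point is the maximizer; this gives (1) $\Leftrightarrow$ (2).

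Next I would establish (2) $\Leftrightarrow$ (3) by the classical Stieltjes substitution $x_i:=r_i^2$. Substituting $r_i^2=2z_i$ into (\ref{critical-B}) and clearing denominators should transform the system into the known equilibrium equations satisfied by the zeros of $L_N^{(\nu-1)}$, namely $\sum_{j\ne i}\frac{1}{z_i-z_j}+\frac{\nu-1/2}{z_i}=\frac{1}{2}$ or the analogous standard form; I would cite the Stieltjes-type characterization of Laguerre zeros (the same source as in Lemma \ref{char-zero-A}, i.e.\ Section 6.7 of \cite{S}, and \cite{AKM2}) rather than rederive it. The bookkeeping here—correctly tracking the factor $2$ in $r_i^2=2z_i^{(\nu-1)}$ and the shift $\nu\mapsto\nu-1$ in the Laguerre index—is the main place where a sign or constant error could creep in, so this substitution is where I would be most careful.

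The identity (\ref{equality-F-B}) is the analogue of (\ref{potential-z-A}), and this is the step I expect to be the real obstacle, since it is an explicit closed-form evaluation of $W_B$ at its maximizer rather than a structural fact. Rather than attempting a direct computation, I would follow the strategy used for the A-case: reduce the left-hand side to known normalization constants. The quantity on the left is $W_B(r)$ evaluated at the maximizer, and by the density (\ref{density-general-b})--(\ref{const-b}) it is tied to the Macdonald-Mehta-Opdam constant $c_k^B$ through a saddle-point/Laplace evaluation of the normalization integral $\int_{C_N^B}e^{\beta W_B(y)}\,dy$ as $\beta\to\infty$. Equating the Laplace-approximation leading term with the exact asymptotics of $c_k^B$ obtained from (\ref{const-b}) via Stirling's formula should isolate $W_B(r)$ and yield the right-hand side of (\ref{equality-F-B}), where the three sums $\sum j\ln j$, $\sum(\nu+j-1)\ln(\nu+j-1)$, and the $N(N+\nu-1)(-1+\ln 2)$ term arise respectively from the $\Gamma(1+jk_2)$, $\Gamma(\tfrac12+k_1+(j-1)k_2)$, and prefactor contributions. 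I would therefore cite the corresponding computation in \cite{AKM2} (analogous to appendix D of \cite{AKM1} for the A-case) to justify (\ref{equality-F-B}), since the matching of Stirling asymptotics to the saddle value is exactly the mechanism that produced (\ref{potential-z-A}).
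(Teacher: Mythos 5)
Your proposal is correct and follows essentially the same route as the paper, whose proof of this lemma consists entirely of citations: \cite{AKM2} (Appendix C, and the comparison of Eq.~(12) with the discussion between (75) and (76) there) and Section 6.7 of \cite{S}; your gradient/strict-concavity argument, the Stieltjes substitution, and the Laplace--Stirling matching against $c_k^B$ are exactly the standard mechanisms behind those cited facts. One small correction to your parenthetical: after substituting $r_i^2=2z_i$ into condition (2) one obtains $\sum_{j\ne i}(z_i-z_j)^{-1}+\tfrac{\nu}{2z_i}=\tfrac12$, which is the Stieltjes equilibrium equation for the zeros of $L_N^{(\alpha)}$ with $\tfrac{\alpha+1}{2}=\tfrac{\nu}{2}$, i.e.\ $\alpha=\nu-1$; the coefficient of $1/z_i$ is $\tfrac{\nu}{2}$, not $\nu-\tfrac12$.
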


\begin{proof}
For the equivalence of (1)-(3) we refer to \cite{AKM2}; see in particular Appendix C there.
 Moreover, this equivalence is more or less also contained in Section 6.7 of \cite{S}.
For the proof of (\ref{equality-F-B}) we also refer to  \cite{AKM2}.
 In fact, one has to compare Eq.~(12)  with the comments on between (75) and (76) there. 
Please notice that the definitions of $(\beta,\nu)$ here 
are slightly different from that in \cite{AKM2}; 
in $\beta$ there is a multiplicative factor 2, and in $\nu$ there is a shift by $1/2$.
\end{proof}

In order to 
handle arbitrary starting points $x$, we need the following asymptotic result for the Bessel functions of type $B$;
see Lemma 5 of \cite{AKM2} and notice that our notions of $\nu,\beta$ are different from  \cite{AKM2} as described above:

\begin{lemma}\label{asymptotic-bessel-B1}
For all $x,y\in C_N^B$ and $\nu>0$,
$$\lim_{\beta\to\infty} J_{(\nu\cdot \beta, \beta)}^B(\sqrt \beta\cdot x, y)= 
 exp\Bigl( \frac{\|x\|_2^2\|y\|_2^2}{4N(\nu+N-1)}\Bigr).$$
This limit holds locally uniformly in $x,y$.
\end{lemma}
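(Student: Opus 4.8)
The plan is to use that the Bessel function is an eigenfunction of the Dunkl Laplacian and to track what its eigenvalue equation becomes under the scaling $x\mapsto\sqrt\beta\,x$ together with $(k_1,k_2)=(\nu\beta,\beta)$. Recall (this is standard for the root systems in question; see the references in the introduction) that on $W$-invariant functions the type-$B_N$ Dunkl Laplacian reduces to $2L$ with $L$ as in (\ref{def-L-B}), and that $u(\xi):=J^B_{(k_1,k_2)}(\xi,y)$ satisfies $2Lu=\|y\|^2u$ with $u(0)=1$. Setting $u_\beta(x):=J^B_{(\nu\beta,\beta)}(\sqrt\beta\,x,y)$ and substituting $\xi=\sqrt\beta\,x$ (so that $\partial_{\xi_i}=\beta^{-1/2}\partial_{x_i}$, $\Delta_\xi=\beta^{-1}\Delta_x$, $\xi_i\pm\xi_j=\sqrt\beta(x_i\pm x_j)$ and $\xi_i=\sqrt\beta\,x_i$), the two first-order terms keep their form while the Laplacian picks up a factor $\beta^{-1}$:
$$\beta^{-1}\Delta_x u_\beta+2\sum_{i=1}^N\sum_{j\ne i}\Bigl(\tfrac{1}{x_i-x_j}+\tfrac{1}{x_i+x_j}\Bigr)\partial_{x_i}u_\beta+2\nu\sum_{i=1}^N\tfrac{1}{x_i}\partial_{x_i}u_\beta=\|y\|^2 u_\beta.$$
Letting $\beta\to\infty$ formally removes the second-order term, which suggests that any limit $\phi$ of $u_\beta$ solves the first-order equation
$$2\sum_{i=1}^N\Bigl(\sum_{j\ne i}\bigl(\tfrac{1}{x_i-x_j}+\tfrac{1}{x_i+x_j}\bigr)+\tfrac{\nu}{x_i}\Bigr)\partial_{x_i}\phi=\|y\|^2\phi,\qquad \phi(0,y)=1.\qquad(*)$$

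Next I would guess the product solution $\phi(x,y)=\exp(c\,\|x\|^2\|y\|^2)$. Since $\partial_{x_i}\phi=2cx_i\|y\|^2\phi$, the left-hand side of $(*)$ equals $4c\,\|y\|^2\phi$ times $\sum_{i}x_i\bigl(\sum_{j\ne i}(\frac{1}{x_i-x_j}+\frac{1}{x_i+x_j})+\frac{\nu}{x_i}\bigr)$, and the elementary pairings $\frac{x_i}{x_i-x_j}+\frac{x_j}{x_j-x_i}=1$ and $\frac{x_i}{x_i+x_j}+\frac{x_j}{x_i+x_j}=1$, together with $\sum_i\nu=\nu N$, give the identity
$$\sum_{i=1}^N x_i\Bigl(\sum_{j\ne i}\bigl(\tfrac{1}{x_i-x_j}+\tfrac{1}{x_i+x_j}\bigr)+\tfrac{\nu}{x_i}\Bigr)=\tfrac{N(N-1)}{2}+\tfrac{N(N-1)}{2}+\nu N=N(\nu+N-1).$$
Hence $(*)$ forces $4c\,N(\nu+N-1)=1$, i.e. $c=1/(4N(\nu+N-1))$, which is exactly the constant in the claim. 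For $N=1$ this also falls out of the explicit series of the normalized one-dimensional modified Bessel function, $\sum_{m\ge0}\frac{(\beta x^2y^2/4)^m}{m!\,(\nu\beta+1/2)_m}\to\sum_{m\ge0}\frac{(x^2y^2/(4\nu))^m}{m!}=\exp\bigl(x^2y^2/(4\nu)\bigr)$, which already exhibits the mechanism cleanly.

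To make the passage to the limit rigorous I would argue by normal families. The key input is an upper bound on $u_\beta$ that is uniform in $\beta$ on compact subsets of $\mathbb{C}^N$, generalizing the elementary one-variable estimate $\sum_{m\ge0}\frac{(z^2/4)^m}{m!\,(\alpha+1)_m}\le\exp\bigl(z^2/(4(\alpha+1))\bigr)$; such a bound follows from the expansion of $J^B_k$ as a generalized hypergeometric series in Jack polynomials by estimating the generalized Pochhammer symbols from below, and it shows that the growth of the multiplicities exactly compensates the $\sqrt\beta$-dilation, keeping the entire functions $u_\beta$ locally bounded. By Montel's theorem the family $(u_\beta)_\beta$ is then normal, so along subsequences $u_\beta$ converges locally uniformly to an entire limit $\phi$, and by Cauchy's estimates all partial derivatives converge as well. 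Consequently every subsequential limit $\phi$ is a genuine solution of $(*)$ on the open set where the coordinates are distinct and nonzero (hence, being entire, everywhere), is $W$-invariant, and satisfies $\phi(0,y)=1$.

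The main obstacle, and the place I expect the real work to be, is the identification step: a single scalar transport equation need not determine $\phi$, so one must show that $(*)$ together with the side conditions forces $\phi$ to depend on $x$ only through $\|x\|^2$. Once radiality is known, inserting $\phi=F(\|x\|^2)$ into $(*)$ reduces it to the scalar ODE $4N(\nu+N-1)F'=\|y\|^2F$ with $F(0)=1$, whose unique solution is the claimed exponential; uniqueness of the limit then upgrades subsequential convergence to full locally uniform convergence. To establish this radial rigidity I would adjoin to $(*)$ the freezing limits of the remaining commuting eigenvalue equations $p_{2m}(T)J^B_k=p_{2m}(y)J^B_k$ for the symmetric Dunkl operators, whose leading-order balances should pin down the radial form; alternatively, one can bypass the identification entirely by sharpening the uniform bound enough to run dominated convergence directly on the Jack-polynomial series term by term, exactly as in the $N=1$ computation above, the only difficulty there being the combinatorial control of the multivariate tails.
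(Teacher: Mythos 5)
Your formal derivation is a genuine insight and correctly pins down the constant: the eigenvalue equation $2Lu=\|y\|^2u$, the $\beta^{-1}$ suppression of the Laplacian under the dilation, and the identity $\sum_i x_i\bigl(\sum_{j\ne i}(\frac{1}{x_i-x_j}+\frac{1}{x_i+x_j})+\frac{\nu}{x_i}\bigr)=N(N+\nu-1)$ do force $c=1/(4N(\nu+N-1))$, and your $N=1$ series check is exactly right. As a proof, however, there are two genuine gaps, the second of which you name yourself. First, the locally uniform bound on $u_\beta$ needed for Montel is only asserted; it requires controlling the Jack-polynomial terms uniformly over all partitions $\kappa$ while the parameter $b=k_1+(N-1)k_2+\frac12$ and the Jack parameter $1/k_2$ vary simultaneously with $\beta$, which is precisely the combinatorial work you defer. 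Second, the identification step does not close as stated: the transport equation $(*)$ together with $\phi(0,y)=1$ does not determine $\phi$. Along a characteristic $\dot x=b(x)$ of $(*)$ one has $\frac{d}{dt}\|x\|^2=4N(\nu+N-1)$ by your identity, but traced backwards a characteristic started near a wall $x_i=x_j$ reaches that wall in backward time of order $(x_i-x_j)^2$, i.e.\ before $\|x\|^2$ has decayed to $0$; hence not every characteristic emanates from the origin, and one may multiply a solution of $(*)$ by a nonconstant first integral of the flow without violating the initial condition. Excluding this would require using the entirety and $W$-invariance of the subsequential limits in an essential way (or adjoining the higher eigenvalue equations, as you suggest), none of which is carried out.

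For comparison: the paper does not prove this lemma at all; it is quoted from Lemma~5 of \cite{AKM2}, modulo a dictionary between the parameter conventions. The proof there runs along exactly the route you mention only as a fallback, namely the expansion of the $W$-invariant Bessel function of type $B$ as a ${}_0F_1$-type hypergeometric series in Jack polynomials, with termwise asymptotics of the generalized Pochhammer symbols and dominated convergence over the partitions. That route supplies the missing uniform bound and bypasses the identification problem entirely, so if you want a self-contained argument I would recommend executing that computation (your $N=1$ calculation is its one-variable shadow) rather than repairing the PDE/normal-families scheme.
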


We now turn to the  main result of this section:

\begin{theorem}\label{clt-main-b1}
Fix some starting point $x$ in  the Weyl chamber $C_N^B$, and 
consider the associated Bessel processes $(X_{t,k})_{t\ge0}$ of type $B_N$ on $C_N^B$ for $k=(k_1,k_2)$.
Then, for the vector $r\in C_N^B$ of Lemma \ref{char-zero-B1},
$$\frac{X_{t,(\nu\cdot\beta,\beta)}}{\sqrt t} -  \sqrt{\beta }\cdot r$$
converges for $\beta\to\infty$ to the centered $N$-dimensional distribution $N(0,t\cdot \Sigma)$
with the regular covariance matrix $\Sigma$ with  $\Sigma^{-1}=(s_{i,j})_{i,j=1,\ldots,N}$ with
\begin{equation}\label{covariance-matrix-B}
s_{i,j}:=\left\{ \begin{array}{r@{\quad\quad}l}  1+ \frac{2\nu}{r_i^2}+2\sum_{l\ne i} (r_i-r_l)^{-2}+2\sum_{l\ne i} (r_i+r_l)^{-2} &
 \text{for}\quad i=j \\
 2(r_i+r_j)^{-2}  -2(r_i-r_j)^{-2} & \text{for}\quad i\ne j  \end{array}  \right.  . 
\end{equation}
\end{theorem}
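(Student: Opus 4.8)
The plan is to follow the proof of Theorem~\ref{clt-main-a} almost verbatim, the one genuinely new ingredient being the Bessel factor which now survives because the process starts at a general $x$. By the Brownian space-time scaling visible in (\ref{density-general-b}) I may again assume $t=1$. Substituting $y\mapsto y+\sqrt\beta\,r$ in (\ref{density-general-b}), the recentered law $X_{1,(\nu\beta,\beta)}-\sqrt\beta\,r$ has the Lebesgue density
\[
f_\beta(y)=c_k^B\,e^{-\|x\|^2/2}\,e^{-\|y+\sqrt\beta r\|^2/2}\,J^B_{(\nu\beta,\beta)}(x,y+\sqrt\beta r)\,w_k^B(y+\sqrt\beta r)
\]
on $C_N^B-\sqrt\beta\,r$ (and $0$ otherwise), where $k=(\nu\beta,\beta)$. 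Writing $(y_i+\sqrt\beta r_i)^2-(y_j+\sqrt\beta r_j)^2=\beta(r_i^2-r_j^2)(1+\tfrac{y_i-y_j}{\sqrt\beta(r_i-r_j)})(1+\tfrac{y_i+y_j}{\sqrt\beta(r_i+r_j)})$ and $y_i+\sqrt\beta r_i=\sqrt\beta r_i(1+\tfrac{y_i}{\sqrt\beta r_i})$ inside $w_k^B$ of (\ref{def-wk-b}), I would split $f_\beta=\tilde c_\beta\cdot J^B_{(\nu\beta,\beta)}(x,y+\sqrt\beta r)\cdot g_\beta(y)$ into a factor $\tilde c_\beta$ constant in $y$, the Bessel factor, and a $y$-dependent remainder $g_\beta$.

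Collecting the terms constant in $y$, the Gaussian contributes $-\tfrac12\beta\|r\|^2$, while the $\beta$-linear part of $\log w_k^B$ equals $\beta\bigl(2\sum_{i<j}\ln(r_i^2-r_j^2)+\nu\sum_i\ln r_i^2\bigr)$; summed, these are precisely $\beta$ times the left-hand side of (\ref{equality-F-B}), so Lemma~\ref{char-zero-B1} converts them into the explicit closed form on its right. Together with the pure power $\beta^{\gamma_B}$ (coming from the $\ln\beta$ in each logarithm, where $\gamma_B=\beta N(N-1+\nu)$) and the constant $c_k^B$ of (\ref{const-b}), Stirling's formula $\Gamma(s+1)\sim\sqrt{2\pi s}(s/e)^s$ applied to $\Gamma(1+j\beta)$ and $\Gamma(\tfrac12+\nu\beta+(j-1)\beta)$ yields a finite, strictly positive limit $\lim_{\beta\to\infty}\tilde c_\beta$, exactly as (\ref{a-density-constants})--(\ref{a-density-constants-limit}) did in the A-case.

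For $g_\beta$, the expansion $\ln(1+u)=u-\tfrac{u^2}{2}+O(u^3)$ applied to the three groups of logarithms produces, at order $\sqrt\beta$, the linear form $\sqrt\beta\sum_i y_i\bigl(\sum_{j\ne i}(\tfrac{2}{r_i-r_j}+\tfrac{2}{r_i+r_j})+\tfrac{2\nu}{r_i}\bigr)$, which by part~(2) of Lemma~\ref{char-zero-B1} equals $\sqrt\beta\langle y,r\rangle$ and thus cancels the term $-\sqrt\beta\langle y,r\rangle$ coming from $e^{-\|y+\sqrt\beta r\|^2/2}$; this is the exact B-analogue of (\ref{zero-equation-a}). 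The surviving order-$1$ terms give
\[
\lim_{\beta\to\infty}g_\beta(y)=\exp\Bigl(-\tfrac{\|y\|^2}{2}-\sum_{i<j}\tfrac{(y_i-y_j)^2}{(r_i-r_j)^2}-\sum_{i<j}\tfrac{(y_i+y_j)^2}{(r_i+r_j)^2}-\nu\sum_i\tfrac{y_i^2}{r_i^2}\Bigr),
\]
and matching the diagonal $y_i^2$- and off-diagonal $y_iy_j$-coefficients of this quadratic form with $-\tfrac12\sum_{i,j}s_{i,j}y_iy_j$ reproduces exactly (\ref{covariance-matrix-B}). Finally, using the symmetry $J^B_k(u,v)=J^B_k(v,u)$ and writing $y+\sqrt\beta r=\sqrt\beta(r+y/\sqrt\beta)$, Lemma~\ref{asymptotic-bessel-B1} gives $J^B_{(\nu\beta,\beta)}(x,y+\sqrt\beta r)\to\exp(\|r\|^2\|x\|^2/(4N(\nu+N-1)))$ locally uniformly; since this limit is constant in $y$ it is absorbed into the normalization, which is precisely why the limiting law does not depend on $x$.

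It then remains to justify the interchange of limit and integral against $f\in C_b(\mathbb R^N)$, as in (\ref{density-a-limit-2}). Exactly as in (\ref{ln-expansion-remainder}), the Lagrange form $\ln(1+u)=u-\tfrac{u^2}{2}w$ with $w>0$ makes every quadratic remainder nonpositive, so after the cancellation of the $\sqrt\beta$-terms one gets the uniform bound $g_\beta(y)\le e^{-\|y\|^2/2}$. For $x=0$ one has $J^B_k\equiv1$, dominated convergence applies verbatim, and testing against $f\equiv1$ then fixes the normalization automatically and identifies the limit as $N(0,\Sigma)$. The main obstacle is the general starting point: one must dominate $J^B_{(\nu\beta,\beta)}(x,y+\sqrt\beta r)\,g_\beta(y)$ uniformly in $\beta$, and the crude estimate $J^B_k(x,v)\le e^{\langle x,v\rangle}$ is too lossy, since the spurious factor $e^{\sqrt\beta\langle x,r\rangle}$ it creates is not compensated by $\tilde c_\beta$. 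I would instead combine the locally uniform convergence of Lemma~\ref{asymptotic-bessel-B1} on compacta with a separate uniform tail estimate for large $\|y\|$---either by completing the square in $e^{\langle x,v\rangle-\|v\|^2/2}$ against a uniform control of the $x=0$ constant, or, more robustly, by a uniform second-moment (tightness) bound for $X_{1,(\nu\beta,\beta)}-\sqrt\beta r$. Once tightness is secured, the pointwise limit computed above identifies the weak limit as $N(0,\Sigma)$ with $\Sigma^{-1}$ as in (\ref{covariance-matrix-B}), and rescaling back to general $t>0$ gives $N(0,t\Sigma)$.
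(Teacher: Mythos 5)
Your proposal follows the paper's proof essentially step for step: the reduction to $t=1$, the recentered density, the split into a $y$-independent constant evaluated via (\ref{equality-F-B}) and Stirling, the cancellation of the $\sqrt\beta$-linear terms via part (2) of Lemma \ref{char-zero-B1}, the identification of the limiting quadratic form with $\Sigma^{-1}$, and the use of Lemma \ref{asymptotic-bessel-B1} to turn the Bessel factor into a $y$-independent constant. All of these computations are correct and match the paper.

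The one place you stop short is the domination of the Bessel factor for a general starting point $x$, and here your diagnosis is slightly off: you dismiss the bound $J_k^B(x,v)\le e^{\langle x,v\rangle}$ as ``too lossy,'' but the paper uses exactly this bound, just restricted to the region where it is not lossy. Namely, on $\{y:\langle x,y\rangle\ge\langle x,\sqrt\beta\, r\rangle\}$ the estimate gives $J_k^B(x,y+\sqrt\beta r)\le e^{2\langle x,y\rangle}$, a $\beta$-independent bound that is beaten by your Gaussian bound $g_\beta(y)\le e^{-\|y\|^2/2}$; on the complementary region, the relevant points $w=r+y/\sqrt\beta$ lie in the set $\{w\in C_N^B:\langle x,w\rangle\le\langle x,r\rangle\}$, which is compact (for $x\ne0$), so the locally uniform convergence in Lemma \ref{asymptotic-bessel-B1} supplies a uniform bound there. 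This is precisely the combination ``locally uniform convergence on a compactum plus a crude bound elsewhere'' that you gesture at; neither a second-moment tightness argument nor completing the square is needed. Since you left this step as an unexecuted alternative, it is the only genuine gap, and it is closed by the region-splitting just described.
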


\begin{proof} As in the A-case we may assume that $t=1$ without loss of generality. Let $k=(\nu\cdot\beta,\beta)$.
Taking the starting point $x$ into account,
$X_{1,k}$ has the  density
$$c_k^B e^{-\|x\|^2/2-\|y\|^2/2}\cdot J_k^B(x,y)
\cdot exp\Bigl( 2\beta \sum_{i<j}\ln(y_i^2-y_j^2) + 2\nu\beta \sum_{i=1}^N \ln y_i\Bigr)$$
on $C_N^B$. Hence, $X_{1,k}-\sqrt{\beta}\cdot r$ has the density
\begin{align}\label{b-density-detail}
f_\beta^B(y):=c_k^B & e^{-\|x\|^2/2} J_k^B(x,y+\sqrt{\beta}\cdot r)e^{-\|y+\sqrt{\beta}\cdot r \|^2/2}\times \\
 \times exp\Bigl(&2\beta \sum_{i<j}\ln\bigl((y_i+\sqrt{\beta}\cdot r_i)^2- (y_j+\sqrt{\beta}\cdot r_j)^2\bigr)
+ 2\nu\beta \sum_{i=1}^N \ln (y_i+\sqrt{\beta}\cdot r_i)\Bigr)\notag\\
=c_k^B & e^{-\|x\|^2/2} J_k^B(x,y+\sqrt{\beta}\cdot r)e^{-\|y\|^2/2} e^{-\beta\|r\|^2/2}  e^{-\sqrt{\beta} \langle y,r\rangle}
\times \notag\\
 &\times  exp\Bigl(2\beta \sum_{i<j}\ln\bigl(1+ \frac{y_i-y_j}{\sqrt{\beta}(r_i-r_j)}\bigr)+
2\beta \sum_{i<j}\ln\bigl(1+ \frac{y_i+y_j}{\sqrt{\beta}(r_i+r_j)}\bigr)\Bigr) \times \notag\\
 &\times  exp\Bigl(2\nu\beta \sum_{i=1}^N \ln(1+ \frac{y_i}{\sqrt{\beta} r_i})\Bigr) 
 exp\Bigl(2\nu\beta\sum_{i=1}^N \ln(\sqrt{\beta} r_i)\Bigr)\times \notag\\
 &\times
 exp\Bigl(2\beta \sum_{i<j}\ln(\sqrt{\beta}( r_i-r_j)) +2\beta \sum_{i<j}\ln(\sqrt{\beta}( r_i+r_j))\Bigr)
 \notag
\end{align}
on the shifted cone $C_N^B-\sqrt{\beta}\cdot r$ with $f_\beta^B(y)=0$ otherwise on $\mathbb R^N$.
We now split this formula  into two parts
$$f_\beta^B(y)= \tilde c_\beta \cdot h_\beta(y),$$
 where $ h_\beta(y)$ depends on $y$, and where
 the remainder $\tilde c_\beta$   is  constant w.r.t.~$y$.
The part depending on $y$ is given by
\begin{align}&h_\beta(y):=  exp\Bigl(-\|y\|^2/2 -\sqrt{\beta} \langle y,r\rangle +
2\beta \sum_{i<j}\ln\bigl(1+ \frac{y_i-y_j}{\sqrt{\beta}(r_i-r_j)}\bigr)\Bigr)\times\notag\\
 &\times
 exp\Bigl(2\beta \sum_{i<j}\ln\bigl(1+ \frac{y_i+y_j}{\sqrt{\beta}(r_i+r_j)}\bigr)+
2\nu\beta\sum_{i=1}^N \ln(1+ \frac{y_i}{\sqrt{\beta} r_i})\Bigr)\cdot  J_k^B(x,y+\sqrt{\beta}\cdot r).
\notag\end{align}
By the power series  of $\ln(1+x)$,
\begin{equation}\label{ln-expansion-b1}
\ln\bigl(1+ \frac{y_i\pm y_j}{\sqrt{\beta}(r_i\pm r_j)}\bigr) = \frac{y_i\pm y_j}{\sqrt{\beta}(r_i\pm r_j)}
 -\frac{(r_i\pm r_j)^2}{2\beta(r_i\pm r_j)^2} + O(\beta^{-3/2})
\end{equation}
and
\begin{equation}\label{ln-expansion-b2}
\ln(1+ \frac{y_i}{\sqrt{\beta} r_i})= \frac{y_i}{\sqrt{\beta}r_i}-\frac{ y_i^2}{2\beta r_i^2}+ O(\beta^{-3/2}).
\end{equation}
Furthermore, by part (2) of Lemma \ref{char-zero-B1},
\begin{align}\label{zero-equation-b}
-\sqrt{\beta}& \langle y,r\rangle+2\sqrt{\beta} \sum_{i<j} \frac{y_i-y_j}{r_i-r_j} +2\sqrt{\beta} \sum_{i<j} \frac{y_i+y_j}{r_i+r_j} 
+ 2\nu\sqrt{\beta}\sum_{i=1}^N \frac{y_i}{r_i}\\
&= \sqrt{\beta} \sum_{i=1}^Ny_i\Bigl( -r_i+ \sum_{j: \> j\ne i} \frac{1}{r_i-r_j}+ \sum_{j: \> j\ne i}\frac{1}{r_i+r_j}+ \frac{2\nu}{r_i}
\Bigr)=0.
\notag
\end{align}
Therefore, by (\ref{ln-expansion-b1})-(\ref{zero-equation-b}),
\begin{align}\label{density-b-limit-1}
h_\beta(y)=& exp\Bigl(-\|y\|_2^2/2-\sum_{i<j}\frac{(y_i-y_j)^2}{(r_i-r_j)^2} 
-\sum_{i<j}\frac{(y_i+y_j)^2}{(r_i+r_j)^2}\Bigr) \times\\
&\times exp\Bigl(-\nu  \sum_{i=1}^Ny_i^2/r_i^2 + O(\beta^{-1/2})\Bigr)\cdot 
 J_k^B(x,y+\sqrt{\beta}\cdot r).
\notag
\end{align}
We next observe that by Lemma \ref{asymptotic-bessel-B1},
\begin{align}\label{limit-besselfunction-b2}
\lim_{\beta\to\infty} J_{(\nu\cdot \beta, \beta)}^B( x,y+\sqrt{\beta}\cdot r )&=
\lim_{\beta\to\infty} J_{(\nu\cdot \beta, \beta)}^B( x,\sqrt{\beta}(r+ y/\sqrt{\beta}) )\notag\\
&=
 exp\Bigl( \frac{\|x\|_2^2\|r\|_2^2}{4N(\nu+N-1)}\Bigr)=: d_\nu(x).
\end{align}
In summary,
\begin{equation}\label{summary-b1}
\lim_{\beta\to\infty}h_\beta(y)=d_\nu(x) exp\Bigl(-\frac{\|y\|_2^2}{2}-\sum_{i<j}\frac{(y_i-y_j)^2}{(r_i-r_j)^2} 
-\sum_{i<j}\frac{(y_i+y_j)^2}{(r_i+r_j)^2}-\nu  \sum_{i=1}^N\frac{y_i^2}{r_i^2}\Bigr).
\end{equation}

We next study the second factor in the density $f_\beta^B(y)$ in
(\ref{b-density-detail}) which is independent of $y$. This constant is given by
\begin{align}\label{details-const-b11}
\tilde c_\beta:=& c_k^B e^{-\|x\|_2^2/2- \beta\|r\|_2^2/2}  exp\Bigl(2\nu\beta\sum_{i=1}^N\ln(\sqrt\beta \cdot r_i)\Bigr)\times
\notag\\
&\times
 exp\Bigl(2\beta \sum_{i<j}\ln(\sqrt{\beta}( r_i-r_j)) +2\beta \sum_{i<j}\ln(\sqrt{\beta}( r_i+r_j))\Bigr)
 \notag\\
=&  c_k^B exp\Bigl(\beta\Bigl(-\frac{\|r\|_2^2}{2}+2\nu\sum_{i=1}^N\ln r_i +2 \sum_{i<j}(\ln( r_i-r_j)+\ln( r_i+r_j))\Bigr)\Bigr)
 \times\notag\\
&\quad \times
 \beta^{\nu\beta N + \beta N(N-1)}\cdot e^{-\|x\|_2^2/2}
 \notag\notag\\
=&  c_k^B exp\Bigl(\beta\Bigl(
N(N+\nu-1)(-1+\ln 2)+ \sum_{j=1}^N j\ln j + \sum_{j=1}^N(\nu +j-1) \ln(\nu +j-1)\Bigr)\Bigr)
 \times\notag\\
&\quad \times  \beta^{\nu\beta N + \beta N(N-1)}\cdot e^{-\|x\|_2^2/2}.
 \end{align}
Notice that the last equation  above follows from (\ref{equality-F-B}).
We next study the constant  $c_k^B$. We conclude from 
 (\ref{const-b}), Stirling's formula $\Gamma(k+1)\sim \sqrt{2\pi k}(k/e)^k$,  from
$$\frac{\Gamma(k+1/2)}{\Gamma(k+1)}\sim \frac{1}{\sqrt k}$$
for $k\to\infty$, and from a longer elementary calculation that
\begin{align}
 c_k^B \sim &\frac{exp(\beta N(N+\nu-1))\cdot \sqrt{N!}}{ 2^{N\nu\beta +N(N-1)\beta -N/2} \cdot (2\pi)^{N/2}}\times\notag\\
&\quad\times \frac{1}{ \prod_{j=1}^{N} j^{j\beta}\cdot \prod_{j=1}^{N} (\nu-1+j)^{\beta(\nu-1+j)} 
\beta^{N(N-1)\beta -N/2 +\beta\nu N}}.\notag
\end{align}
If we plug this into (\ref{details-const-b11}), we see that
\begin{equation}\label{b-density-constants-limit}
\lim_{\beta\to\infty}\tilde c_\beta= \frac{e^{-\|x\|_2^2/2}2^{N/2}\cdot \sqrt{N!}}{ (2\pi)^{N/2}}.
\end{equation}

Now let $f\in C_b(\mathbb R^N)$ be a bounded continuous function. We conclude from
(\ref{b-density-detail}), (\ref{b-density-constants-limit}), and (\ref{summary-b1}) that
\begin{align}\label{density-b-limit-2}
\lim_{\beta\to\infty}&\int_{\mathbb R^N} f(y)\cdot f_\beta^B(y)\> dy = 
\lim_{\beta\to\infty}\Bigl( \tilde c_\beta \int_{\mathbb R^N}  f(y)\cdot h_\beta(y)\> dy\Bigr)
 \\
=& \frac{e^{-\|x\|_2^2/2} d_\nu(x)  2^{N/2}\cdot \sqrt{N!}}{  (2\pi)^{N/2}} \cdot\notag\\
&\cdot \int_{\mathbb R^N} f(y) \> exp\Bigl(-\Bigl(\frac{\|y\|^2}{2}+\sum_{i<j}\frac{(y_i-y_j)^2}{(r_i-r_j)^2}
+\sum_{i<j}\frac{(y_i+y_j)^2}{(r_i+r_j)^2}+ \nu \sum_{i=1}^N \frac{y_i^2}{r_i^2}
\Bigr)\Bigr) \> dy.
\notag
\end{align}
For this we have to check that we may apply  dominated convergence. For this we again consider
 the Taylor polynomial  of $\ln(1+x)$ and notice that by the Lagrange remainder,
\begin{equation}\label{ln-expansion-remainder-b1}
\ln\bigl(1+ \frac{y_i\pm y_j}{\sqrt{\beta}(z_i\pm z_j)}\bigr) = \frac{y_i\pm y_j}{\sqrt{\beta}(z_i\pm z_j)}
 -\frac{(y_i\pm y_j)^2}{2\beta(z_i\pm z_j)^2}\cdot w_\pm
\end{equation}
with some $w_\pm\in[0,1]$. Moreover, by the same reason,
\begin{equation}\label{ln-expansion-remainder-b2}
\ln(1+ \frac{y_i}{\sqrt{\beta} r_i})= \frac{y_i}{\sqrt{\beta}r_i}-\frac{ y_i^2}{2\beta r_i^2}\cdot w.
\end{equation}
with some $w\in[0,1]$. We here also have to estimate the Bessel functions $J_k^B$. For this
we recapitulate from \cite{RV2} that for all $k$ and all $x,y\in C_N^B$,
$$ J_k^B(x,y+\sqrt{\beta}\cdot r)\le exp(\langle x,y+\sqrt{\beta}\cdot r\rangle).$$
This shows that
\begin{equation}\label{est-j-b1}
J_k(x,y+\sqrt{\beta}\cdot r)\le e^{2\langle x,y\rangle} \quad\quad\text{for}\>\> \beta>0,\> y\in C_N^B
\>\> \text{with }\>\>  \langle x,y\rangle\ge \langle x,\sqrt{\beta}\cdot r \rangle.
\end{equation}
On the other hand, as $h_\beta(y+\sqrt{\beta}\cdot r)>0$ only occurs for $ y/\sqrt{\beta}+r\in C_N^B$, and as
the set 
$$\{w\in  C_N^B:\> \langle x,w\rangle\le \langle x,r \rangle\}$$
is compact, we obtain from Lemma \ref{asymptotic-bessel-B1}
that
$$\sup_{y\in\mathbb  R^N:\>  y/\sqrt{\beta}+ r\in C_N^B, \langle x, y/\sqrt{\beta}\rangle\le \langle x,r \rangle}
J_k(x,\sqrt{\beta}(y/\sqrt{\beta}+r))$$
is bounded. This estimation, (\ref{est-j-b1}), (\ref{ln-expansion-remainder-b1}), and  
 (\ref{ln-expansion-remainder-b2}) readily imply that 
 the dominated convergence theorem in (\ref{density-b-limit-2}) works as claimed.

On the other hand, Eq.~(\ref{density-b-limit-2}) says that the probability measures with the densities $f_\beta^A$
tend weakly to the measure with Lebesgue density
\begin{equation}
\frac{e^{-\|x\|_2^2/2} d_\nu(x)  2^{N/2}\cdot \sqrt{N!}}{  (2\pi)^{N/2}} \cdot
exp\Bigl(-\Bigl(\frac{\|y\|^2}{2}+\sum_{i<j}\frac{(y_i-y_j)^2}{(r_i-r_j)^2}
+\sum_{i<j}\frac{(y_i+y_j)^2}{(r_i+r_j)^2}+ \nu \sum_{i=1}^N \frac{y_i^2}{r_i^2}
\Bigr)\Bigr),
\end{equation}
which is  a probability measure by our results. This measure is  necessarily the normal distribution claimed
 in the  theorem with the correct  normalization. This completes the proof.
\end{proof}

Notice that the final arguments in the proof above for $x=0$
 about the correct normalizations above  automatically lead to 
the following remarkable result on the zeros of the Laguerre
polynomial $L_N^{(\nu-1)}$:

\begin{corollary}\label{corr-determinant-b}
For $N\in\mathbb N$ and $\nu>0$  consider the ordered zeros  $z_1^{(\nu-1)}\ge\ldots\ge z_N^{(\nu-1)}>0$ of the Laguerre
polynomial $L_N^{(\nu-1)}$. Let $r_i:= \sqrt{2 z_i^{(\nu-1)}}$ for $i=1,\ldots,N$, and
form the matrix $S:=(s_{i,j})_{i,j=1,\ldots,N}$ with
\begin{equation}\label{covariance-matrix-B-corr}
s_{i,j}:=\left\{ \begin{array}{r@{\quad\quad}l}  1+ \frac{2\nu}{r_i^2}+2\sum_{l\ne i} (r_i-r_l)^{-2}+2\sum_{l\ne i} (r_i+r_l)^{-2} &
 \text{for}\quad i=j \\
 2(r_i+r_j)^{-2}  -2(r_i-r_j)^{-2} & \text{for}\quad i\ne j  \end{array}  \right.  . 
\end{equation}
Then $det\> S= N!\cdot 2^N$.
\end{corollary}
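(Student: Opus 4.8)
The plan is to read off this determinant identity directly from the proof of Theorem \ref{clt-main-b1}, specialized to the starting point $x=0$, in exactly the way Corollary \ref{corr-determinant} was extracted from Theorem \ref{clt-main-a}. When $x=0$ we have $\|x\|_2^2=0$ and $d_\nu(0)=\exp(0)=1$, so the prefactor $e^{-\|x\|_2^2/2}\,d_\nu(x)\,2^{N/2}\sqrt{N!}/(2\pi)^{N/2}$ of the limiting density in (\ref{density-b-limit-2}) collapses to $2^{N/2}\sqrt{N!}/(2\pi)^{N/2}$. The vector $r$ appearing there is tied to the Laguerre zeros through $r_i=\sqrt{2z_i^{(\nu-1)}}$ by part (3) of Lemma \ref{char-zero-B1}, which is precisely the substitution made in the statement.

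First I would verify that the quadratic form in the exponent of the limit density is exactly $\tfrac12\langle y,Sy\rangle$ with $S=(s_{i,j})$ as in (\ref{covariance-matrix-B-corr}). Expanding $\tfrac12\langle y,Sy\rangle=\tfrac12\sum_i s_{i,i}y_i^2+\sum_{i<j}s_{i,j}y_iy_j$, the diagonal part yields $\tfrac12\|y\|^2+\nu\sum_i y_i^2/r_i^2$ together with $\sum_{i<j}(y_i^2+y_j^2)/(r_i-r_j)^2$ and $\sum_{i<j}(y_i^2+y_j^2)/(r_i+r_j)^2$, while the off-diagonal part contributes $-2\sum_{i<j}y_iy_j/(r_i-r_j)^2+2\sum_{i<j}y_iy_j/(r_i+r_j)^2$. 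Combining the two $(r_i-r_j)^{-2}$ contributions completes the square to $\sum_{i<j}(y_i-y_j)^2/(r_i-r_j)^2$, and likewise the $(r_i+r_j)^{-2}$ contributions give $\sum_{i<j}(y_i+y_j)^2/(r_i+r_j)^2$. This reproduces the exponent in (\ref{density-b-limit-2}) and identifies $S=\Sigma^{-1}$.

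Next I would compare two expressions for the normalizing constant of the centered Gaussian $N(0,\Sigma)=N(0,S^{-1})$. The proof of Theorem \ref{clt-main-b1} establishes, via (\ref{b-density-constants-limit}), Stirling's formula, and (\ref{equality-F-B}), that the limiting Lebesgue density is a probability density with leading constant $2^{N/2}\sqrt{N!}/(2\pi)^{N/2}$. On the other hand, the standard normalization of a centered Gaussian with inverse covariance $S$ is $(\det S)^{1/2}/(2\pi)^{N/2}$. Equating the two constants gives $(\det S)^{1/2}=2^{N/2}\sqrt{N!}$, hence $\det S=2^{N}\,N!$.

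The argument is entirely elementary, and I do not expect a genuine obstacle beyond the bookkeeping in matching coefficients. The only point requiring care is that the limiting density really does have total mass $1$ with the stated constant; but this is exactly the content of the closing paragraph of the proof of Theorem \ref{clt-main-b1}, where the limit is automatically a probability measure because each $f_\beta^B$ is a probability density and weak convergence has already been verified through dominated convergence. Thus no additional work is needed, and the determinant formula falls out as a byproduct of the established normalization.
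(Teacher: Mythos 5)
Your proposal is correct and follows exactly the route the paper intends: the corollary is extracted from the normalization in the proof of Theorem \ref{clt-main-b1} at $x=0$, by identifying the limiting exponent with $\tfrac12\langle y,Sy\rangle$ and equating the constant $2^{N/2}\sqrt{N!}/(2\pi)^{N/2}$ with the Gaussian normalization $(\det S)^{1/2}/(2\pi)^{N/2}$. The coefficient bookkeeping you carry out (completing the squares to recover $\sum_{i<j}(y_i\mp y_j)^2/(r_i\mp r_j)^2$) is exactly the verification the paper leaves implicit.
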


\begin{remark}
 In the case that the processes start in the origin, 
Theorem \ref{clt-main-b1} was derived in \cite{DE2} via the tridiagonal matrix models
 for $\beta$-Laguerre ensembles
of Dumitriu and Edelman where in Theorem 4.1
of \cite{DE2} a direct, quite complicated formula for
 the covariance matrix $\Sigma$ of the limit in terms of the zeros of $L_N^{(\nu-1)}$ and
 the Laguerre polynomials $L_l^{(\nu-1)}$ ($l=1,\ldots,N$) is given. 
As in the Hermite case in Section 2 it is unclear how the expression for  $\Sigma$ in \cite{DE2} 
corresponds to our formula for $\Sigma^{-1}$ above.
\end{remark}

\begin{remark}
If we analyze the constants in the end of the proof of Theorem \ref{clt-main-b1} for arbitrary $x\in C_N^B$,
 we obtain that
$$e^{-\|x\|^2/2}\cdot exp\Bigl(\frac{ \|x\|^2\|r\|^2}{4N(N+\nu-1)}\Bigr)=1$$
with the vector $r$ from Lemma \ref{char-zero-B1}.
This means that $\|r\|^2=2N(N+\nu-1)$. In fact, if
 we translate this equation via (\ref{y-max-B1}) into a corresponding formula for the zeros of
 $L_N^{(\nu-1)}$, then we  just obtain Eq.~(C.10) in \cite{AKM2}.
\end{remark}

Theorem \ref{clt-main-b1}  can be easily extended from fixed starting points $x\in  C_N^B$ to arbitrary starting distributions
 $\mu\in M^1( C_N^B)$:

\begin{corollary}\label{corr-general-starting-b}
Let $\mu\in  M^1( C_N^B)$ be an arbitrary starting distribution and $\nu>0$. Consider the Bessel processes
$(X_{t,(\nu\beta,\beta)})_{t\ge0}$ of type B on  $C_N^B$ with starting distribution $\mu$.
Then, for each $t>0$ and with the vector $r\in  C_N^B$ and the
 normal distribution $N(0,t\cdot\Sigma)$ of
Theorem \ref{clt-main-b1},
$$ \frac{X_{t,(\nu\beta,\beta)}}{\sqrt t}-\sqrt{\beta }\cdot r \to N(0,t\cdot\Sigma)
 \quad \text{in distribution for}\quad \beta\to\infty.$$
\end{corollary}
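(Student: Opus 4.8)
The plan is to reduce the corollary to Theorem \ref{clt-main-b1} by disintegrating the law of $X_{t,(\nu\beta,\beta)}$ over its starting points and then invoking dominated convergence. The decisive structural feature is that the limiting normal distribution $N(0,t\cdot\Sigma)$ in Theorem \ref{clt-main-b1} does \emph{not} depend on the starting point $x\in C_N^B$: the vector $r$ and the matrix $\Sigma$ are determined solely by $N$ and $\nu$ through the zeros of $L_N^{(\nu-1)}$. This $x$-independence is exactly what allows an arbitrary mixture over starting points to converge to one and the same Gaussian.

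First I would fix a bounded continuous test function $f\in C_b(\mathbb R^N)$ and write the law of $X_{t,(\nu\beta,\beta)}$ with starting distribution $\mu$ as the mixture $\int_{C_N^B} K_{t,(\nu\beta,\beta)}(x,\cdot)\, d\mu(x)$. Applying the affine centering map $y\mapsto y/\sqrt t-\sqrt\beta\cdot r$ and integrating $f$, the quantity to be controlled becomes
$$\int_{C_N^B} g_\beta(x)\, d\mu(x), \qquad g_\beta(x):=\int_{C_N^B} f\Bigl(\frac{y}{\sqrt t}-\sqrt\beta\cdot r\Bigr)\, K_{t,(\nu\beta,\beta)}(x,dy).$$
Here $g_\beta(x)$ is precisely the expectation of $f$ evaluated at the centered, rescaled process started in $x$. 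By Theorem \ref{clt-main-b1}, for every fixed $x\in C_N^B$ we have $g_\beta(x)\to \int_{\mathbb R^N} f\, dN(0,t\cdot\Sigma)$ as $\beta\to\infty$, with a limit that is the same for all $x$.

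Next I would pass to the limit inside the $\mu$-integral by dominated convergence. The functions $g_\beta$ are uniformly bounded by $\|f\|_\infty$, which is $\mu$-integrable since $\mu$ is a probability measure, and $x\mapsto g_\beta(x)$ is measurable because $K_{t,(\nu\beta,\beta)}$ is a Markov transition kernel. Therefore
$$\lim_{\beta\to\infty}\int_{C_N^B} g_\beta(x)\, d\mu(x)=\int_{\mathbb R^N} f\, dN(0,t\cdot\Sigma),$$
which is exactly the claimed weak convergence.

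This argument requires essentially no new computation, so there is no serious obstacle; the conceptual point worth flagging is that one does \emph{not} need any uniformity of the convergence in $x$. It would be tempting to worry that the pointwise limit from Theorem \ref{clt-main-b1} might fail to transfer to the mixture without a locally uniform estimate in $x$, but since I combine pointwise convergence with the uniform bound $\|f\|_\infty$, dominated convergence closes the gap with no uniformity needed. The only genuine prerequisite is that the limit be $x$-independent, and that is guaranteed by Theorem \ref{clt-main-b1}.
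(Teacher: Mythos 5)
Your proposal is correct and follows essentially the same route as the paper: disintegrate the law over the starting point via the transition kernel, apply Theorem \ref{clt-main-b1} pointwise in $x$ (using that the limit is $x$-independent), and pass to the limit under the $\mu$-integral by dominated convergence with the uniform bound $\|f\|_\infty$. Nothing further is needed.
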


\begin{proof}
Let $f\in C_b(\mathbb R^N)$ be a bounded continuous function, and let $t>0$ fixed. Using the kernels $K_{t,\beta}$ of (\ref{density-general-b}),
we obtain for the distributions $P_\beta\in  M^1(\mathbb R^N)$ of $ X_{t,(\nu\beta,\beta)}-\sqrt{\beta t}\cdot r$ that
\begin{align}
\int_{\mathbb R^N} f\> dP_\beta&= \int_{C_N^B}\Bigl( \int_{C_N^B} f(y-\sqrt{\beta t}\cdot r)\> K_{t,(\nu\beta,\beta)}(x,dy)\Bigr) d\mu(x)
\notag\\
&=: \int_{C_N^B} (T_\beta f)(x)\> d\mu(x)\notag
\end{align}
where $ (T_\beta f)(x)\to \int_{\mathbb R^N} f\> dN(0,t\cdot\Sigma) $ holds for $\beta\to\infty$  and
 all $x\in  C_N^B$  by Theorem  \ref{clt-main-b1}.
As $\| T_\beta f\|_\infty\le \|  f\|_\infty$, dominated convergence shows that
 $$\int_{\mathbb R^N} f\> dP_\beta\to \int_{\mathbb R^N} f\> dN(0,t\cdot\Sigma)$$
 for all  $f\in C_b(\mathbb R^N)$. This proves the claim.
\end{proof}

\section{A second central limit theorem for the root system $B_{N}$}

In this section we derive a second CLT for Bessel processes of type B with parameters $k=(k_1,k_2)$.
We here fix  $k_2$ and consider the case  $k_1\to\infty$.
We proceed similar to the preceding section for the first CLT in the case B 
and include the case that we have 
an arbitary fixed starting point $x\in C_N^B$.

In order to handle  arbitrary starting points $x$, we need the following asymptotic estimation
 for the Bessel functions of type $B$. For the proof we refer to Lemma 7
 in \cite{AKM2} where there a slightly different notation with $(\nu\beta,\beta)=(k_1,k_2)$ is used, 
and where in the right hand side limit in  Lemma 7 in \cite{AKM2} the terms $\sqrt 2$ 
have to be replaced by $2$:

\begin{lemma}\label{B-A convergence} Let $k_2>0$. Then, 
$$ \lim_{k_1\to\infty} J^B_{(k_1,k_2)}(\sqrt{k_1}x, y)= 
J^A_{k_2}(x^2/2,y^2/2)$$
 locally uniformly in $x,y\in  C_N^B$ where we use the notation $x^2=:(x_1^2,...,x^2_N)\in C_N^B$.
\end{lemma}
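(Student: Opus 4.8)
The plan is to reduce the statement to a term-by-term limit of the Jack-polynomial (generalized hypergeometric) series representing the two Bessel functions. After passing to squared coordinates, the type-$B_N$ Bessel function with multiplicities $(k_1,k_2)$ admits a $\,{}_0F_1$-type expansion
\[ J^B_{(k_1,k_2)}(x,y)=\sum_{\lambda}\frac{1}{[\mu_{k_1}]_\lambda^{(\alpha)}}\cdot\frac{1}{4^{|\lambda|}\,|\lambda|!}\cdot\frac{C_\lambda^{(\alpha)}(x^2)\,C_\lambda^{(\alpha)}(y^2)}{C_\lambda^{(\alpha)}(\mathbf 1)}, \]
whereas the type-$A$ Bessel function is the corresponding $\,{}_0F_0$-type series
\[ J^A_{k_2}(\xi,\eta)=\sum_{\lambda}\frac{1}{|\lambda|!}\cdot\frac{C_\lambda^{(\alpha)}(\xi)\,C_\lambda^{(\alpha)}(\eta)}{C_\lambda^{(\alpha)}(\mathbf 1)}. \]
Here the sums run over partitions $\lambda$ with at most $N$ parts, $C_\lambda^{(\alpha)}$ are the Jack polynomials of index $\alpha=1/k_2$, the bracket $[\,\cdot\,]_\lambda^{(\alpha)}$ is the generalized Pochhammer symbol, and $\mu_{k_1}=k_1+(N-1)k_2+\tfrac12$ is the Laguerre index produced by the $1/x_i$-drift in (\ref{def-L-B}). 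The decisive structural observation is that both series are built from the \emph{same} Jack polynomials and differ only through the factor $[\mu_{k_1}]_\lambda^{(\alpha)}$ in the type-$B$ denominator.

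First I would exploit homogeneity. Since $C_\lambda^{(\alpha)}$ is homogeneous of degree $|\lambda|$, the scaling $x\mapsto\sqrt{k_1}\,x$ yields $C_\lambda^{(\alpha)}\bigl((\sqrt{k_1}x)^2\bigr)=k_1^{|\lambda|}C_\lambda^{(\alpha)}(x^2)$. Rewriting $C_\lambda^{(\alpha)}(x^2)C_\lambda^{(\alpha)}(y^2)/4^{|\lambda|}=C_\lambda^{(\alpha)}(x^2/2)\,C_\lambda^{(\alpha)}(y^2/2)$ by homogeneity as well, the $\lambda$-th term of $J^B_{(k_1,k_2)}(\sqrt{k_1}x,y)$ equals the factor $k_1^{|\lambda|}/[\mu_{k_1}]_\lambda^{(\alpha)}$ times the $\lambda$-th term of the target $J^A_{k_2}(x^2/2,y^2/2)$. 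It therefore remains to show $k_1^{|\lambda|}/[\mu_{k_1}]_\lambda^{(\alpha)}\to1$. This follows at once from the product formula $[\mu_{k_1}]_\lambda^{(\alpha)}=\prod_{i,j}\bigl(\mu_{k_1}-(i-1)k_2+(j-1)\bigr)$, a product of exactly $|\lambda|$ factors each of the form $k_1+O(1)$ as $k_1\to\infty$ with $k_2$ fixed; this is the uniform version of the rank-one identity $k_1^{n}/(k_1+\tfrac12)_n\to1$.

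The remaining point is to interchange $\lim_{k_1\to\infty}$ with the summation and to upgrade pointwise to locally uniform convergence, and here I would argue by domination. For $x,y\in C_N^B$ the vectors $x^2,y^2$ lie in the positive orthant, on which Jack polynomials with $\alpha>0$ are nonnegative; moreover, because $\lambda$ has at most $N$ parts we have $i-1\le N-1$, so every factor of $[\mu_{k_1}]_\lambda^{(\alpha)}$ is at least $k_1+\tfrac12>0$, whence $0\le k_1^{|\lambda|}/[\mu_{k_1}]_\lambda^{(\alpha)}\le 1$ uniformly in large $k_1$. Thus each term of $J^B_{(k_1,k_2)}(\sqrt{k_1}x,y)$ is bounded, uniformly in $k_1$, by the corresponding nonnegative term of the convergent series for $J^A_{k_2}(x^2/2,y^2/2)$. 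Since the latter function is entire, its series converges locally uniformly, so its tails are uniformly small on compacta; this controls the $B$-tails uniformly in $k_1$ and, together with the term-wise limit, delivers the claimed locally uniform convergence.

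The main obstacle is not the asymptotic bookkeeping above but the input that feeds it: establishing the $\,{}_0F_1$-series representation of $J^B_{(k_1,k_2)}$ in squared coordinates with the correct index $\mu_{k_1}$ and the correct $k_1$-independent constants, i.e.\ reconciling the transition-kernel normalization $J^B_k(0,y)=1$ used here with the Jack-hypergeometric normalization. One can pin down $\mu_{k_1}$ without the full series by substituting $u_i=y_i^2/2$ into the eigenvalue equation $2L\,J^B_k(x,\cdot)=\|x\|^2 J^B_k(x,\cdot)$ for the operator $2L$ of (\ref{def-L-B}): this turns $2L$ into a Laguerre-type operator whose first-order coefficient is $1+2k_1$, which both identifies the index $\mu_{k_1}=k_1+(N-1)k_2+\tfrac12$ and makes the $B\to A$ degeneration as $k_1\to\infty$ transparent, while the series route above is what actually produces the limit.
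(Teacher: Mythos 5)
Your argument is correct and is essentially the proof that the paper delegates to Lemma~7 of \cite{AKM2} (and, for purely imaginary arguments with rates, to \cite{RV3}): those sources obtain the $B\to A$ degeneration exactly through the ${}_0F_1^{(\alpha)}(\mu;\cdot,\cdot)\to{}_0F_0^{(\alpha)}$ confluence of the Jack-polynomial expansions, with $\alpha=1/k_2$, $\mu=k_1+(N-1)k_2+\tfrac12$, and the term-wise bound $0\le k_1^{|\lambda|}/[\mu]_\lambda^{(\alpha)}\le 1$ supplying the domination, just as you do. The only ingredient you should cite rather than re-derive is the series representation $J^B_{(k_1,k_2)}(x,y)={}_0F_1^{(\alpha)}\bigl(\mu;x^2/2,y^2/2\bigr)$, which is available in \cite{BF} and \cite{RV3}.
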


We notice that Lemma \ref{B-A convergence}  was derived for $x,y\in i\cdot \mathbb R^N$ 
 with precise estimates for the rate of convergence in \cite{RV3}.

We now turn to the main result of this section, a CLT 
where the limit distribution is a distribution of type A as studied in Section 2:

\begin{theorem}\label{Theorem-b-2}
For any fixed starting point $x\in C^B_N$ consider the Bessel processes $(X_{t,(k_1,k_2) })_{t\geq 0}$
 of type $B_N$ on  $C^B_N$. Consider the vector  $\mathbf{1}:=(1,...,1)\in \mathbb{R}^N$.
 Then, for all  $k_2>0$ and $t>0$, 
 $$ X_{t,(k_1,k_2)}-\sqrt{2t \cdot k_1}\cdot\mathbf{1}$$ 
converges  for $k_1\to \infty$ in distribution to
 $X^A_{t/2,k_2}$, where $(X^A_{s,k_2})_{s\ge 0}$ is a Bessel process of type A starting in the origin.
\end{theorem}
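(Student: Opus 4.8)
The plan is to follow the pattern of the proofs of Theorems \ref{clt-main-a} and \ref{clt-main-b1}. First I would exploit the Brownian space-time scaling $K_{t,k}(x,A)=K_{1,k}(x/\sqrt t,A/\sqrt t)$ of the kernels in (\ref{density-general-b}) to reduce to $t=1$: writing $X_{t,k}\stackrel d=\sqrt t\,X_{1,k}$ with the starting point rescaled to $x/\sqrt t$ turns the assertion into $X_{1,(k_1,k_2)}-\sqrt{2k_1}\cdot\mathbf 1\to X^A_{1/2,k_2}$, and since the limit will turn out not to depend on the starting point, scaling back by $\sqrt t$ together with the type-A scaling $\sqrt t\,X^A_{1/2,k_2}\stackrel d=X^A_{t/2,k_2}$ recovers the general statement. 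So I assume $t=1$, put $m:=\sqrt{2k_1}$, and write down the Lebesgue density $f_{k_1}^B$ of $X_{1,(k_1,k_2)}-m\cdot\mathbf 1$ on the shifted cone $C_N^B-m\cdot\mathbf 1$, exactly as in (\ref{b-density-detail}) but now with the shift $m\cdot\mathbf 1$ in place of $\sqrt\beta\cdot r$. Note that $C_N^B-m\cdot\mathbf 1$ increases to $C_N^A$ as $k_1\to\infty$, which already matches the support of the type-A limit.

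Next I would substitute $y=y'+m\cdot\mathbf 1$ and split $f_{k_1}^B(y')=\tilde c_{k_1}\cdot h_{k_1}(y')$ into a factor $\tilde c_{k_1}$ constant in $y'$ and a factor $h_{k_1}$ depending on $y'$. The key point is that $m\cdot\mathbf 1$ is the exact critical point of the leading potential $2k_1\sum_i\ln y_i-\|y\|^2/2$, i.e.\ $2k_1/m=m$ for $t=1$; expanding $2k_1\sum_i\ln(1+y_i'/m)$ by the power series of $\ln(1+x)$, the linear term $\tfrac{2k_1}{m}\sum_i y_i'$ therefore cancels the linear Gaussian term, the quadratic term contributes $-\tfrac12\|y'\|^2$, and the higher terms are $O(k_1^{-1/2})$. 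Together with the Gaussian remnant $-\tfrac12\|y'\|^2$ this produces the exponent $-\|y'\|^2$. In the weight $\prod_{i<j}(y_i^2-y_j^2)^{2k_2}=\prod_{i<j}(y_i'-y_j')^{2k_2}(y_i'+y_j'+2m)^{2k_2}$ the first factor is exactly the type-A weight $\prod_{i<j}(y_i'-y_j')^{2k_2}$, while from the second factor only the constant $(2m)^{2k_2\binom{N}{2}}$ survives into $\tilde c_{k_1}$, since $k_2$ is fixed and $2k_2\sum_{i<j}\ln(1+(y_i'+y_j')/(2m))\to0$. Finally, writing the second argument of the Bessel factor as $y'+m\cdot\mathbf 1=\sqrt{k_1}(\sqrt2\cdot\mathbf 1+y'/\sqrt{k_1})$ and using the symmetry $J_k^B(a,b)=J_k^B(b,a)$ together with the locally uniform convergence of Lemma \ref{B-A convergence}, one obtains $J_{(k_1,k_2)}^B(x,y'+m\cdot\mathbf 1)\to J_{k_2}^A(\mathbf 1,x^2/2)$, a positive constant in $y'$. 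Hence $h_{k_1}(y')$ converges pointwise on $C_N^A$ to $J_{k_2}^A(\mathbf 1,x^2/2)\cdot e^{-\|y'\|^2}\prod_{i<j}(y_i'-y_j')^{2k_2}$, which is exactly the shape of the density (\ref{density-A-0}) of $X^A_{1/2,k_2}$.

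It remains to control the constant and to justify the limit. As in the B1-case I would evaluate $\tilde c_{k_1}$ from (\ref{const-b}) by Stirling's formula $\Gamma(k+1)\sim\sqrt{2\pi k}(k/e)^k$ and $\Gamma(k+1/2)/\Gamma(k+1)\sim k^{-1/2}$; the factors $2^{Nk_1}$, $k_1^{Nk_1}$ and $e^{\mp Nk_1}$ cancel against the explicit factors $e^{-Nk_1}(2k_1)^{k_1N}(2m)^{2k_2\binom{N}{2}}$ pulled out of the density, so that $\tilde c_{k_1}$ converges to a finite positive constant. For the passage to the limit I would apply dominated convergence: the power-series remainders written in Lagrange form as in (\ref{ln-expansion-remainder-b1})--(\ref{ln-expansion-remainder-b2}) bound $h_{k_1}$, up to the Bessel factor, by a fixed Gaussian times $\prod_{i<j}(y_i'-y_j')^{2k_2}$, and the Bessel factor is dominated exactly as in (\ref{est-j-b1}): from $J_k^B(x,y'+m\cdot\mathbf 1)\le e^{\langle x,y'+m\cdot\mathbf 1\rangle}$ (see \cite{RV2}) one gets the $k_1$-free bound $e^{2\langle x,y'\rangle}$ on the region $\langle x,y'\rangle\ge\langle x,m\cdot\mathbf 1\rangle$, while on the complementary region, which corresponds to $y/\sqrt{k_1}$ ranging in a compact subset of $C_N^B$, Lemma \ref{B-A convergence} yields uniform boundedness. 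I expect this Bessel domination --- reconciling the growing shift $m\cdot\mathbf 1$ inside the argument with a $k_1$-independent integrable majorant --- to be the main technical obstacle, just as in the proof of Theorem \ref{clt-main-b1}. Once dominated convergence is in force, the identity $\int_{\mathbb R^N}f_{k_1}^B\,dy'=1$ forces the limiting constant to be the correct normalization, so the limiting density is precisely that of $X^A_{1/2,k_2}$; scaling back in $t$ then completes the proof.
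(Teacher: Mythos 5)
Your proposal is correct and follows essentially the same route as the paper: reduction to $t=1$ by self-similarity, shifting by $\sqrt{2k_1}\cdot\mathbf 1$, splitting off the type-A weight $\prod_{i<j}(y_i-y_j)^{2k_2}$ from $\prod_{i<j}(y_i^2-y_j^2)^{2k_2}$, the exact cancellation of the linear term in the $\ln(1+y_i/\sqrt{2k_1})$ expansion, Lemma \ref{B-A convergence} for the Bessel factor, and the same dominated-convergence scheme (Lagrange remainders, the bound $J^B_{(k_1,k_2)}(x,\cdot)\le e^{\langle x,\cdot\rangle}$ from \cite{RV2} plus compactness) with the normalization fixed by total mass one. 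The only difference is your optional Stirling evaluation of the constant, which the paper's $h\equiv 1$ argument renders unnecessary.
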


\begin{proof}
As in the proofs of the preceding CLTs we may assume
 that $t=1$ by the same self-similarity property of all involved processes.
We recapitulate that  $X_{1,(k_1,k_2) }$ has the density 
$$c_{(k_1,k_2)}^B e^{-\|x\|^2/2-\|y\|^2/2 }\cdot J^B_{(k_1,k_2) }(x,y)\cdot\prod_{i<j}(y^2_i-y^2_j)^{2k_2}
\cdot exp\left(2k_1\sum_{i=1}^N\ln y_i\right)$$
for $y$ in the interior of $ C^B_N$ where this density is equal to 0 otherwise on 
 $\mathbb{R}^N$, and where
we use the notations from Section 3. 
Hence, $X_{1,(k_1,k_2)}-\sqrt{ 2 k_1}\cdot\mathbf{1}$ has the density 
\begin{align}
f^B_{(k_1,k_2)}(y):&=c^B_{(k_1,k_2)}\cdot e^{-\|x \|^2/2}\cdot e^{\frac{-\|y+\sqrt{2k_1}\cdot\mathbf{1}\|^2}{2}}
\cdot J^B_{(k_1,k_2)}(x,y+\sqrt{ 2k_1}\cdot\mathbf{1})\cdot
\notag\\
& \cdot
\prod_{i<j}((y_i+\sqrt{2k_1})^2-(y_j+\sqrt{2k_1})^2)^{2k_2}
\cdot
 exp\left( 2k_1\sum _{i=1}^N \ln \left(y_i+\sqrt{2 k_1}\right)\right)
\notag\\
&=:\> c(k_1,k_2,x,N)\cdot\prod_{i<j}(y_i-y_j)^{2k_2} \tilde f_{k_1,k_2,x}(y)
\notag\end{align}
with
\begin{align}
\tilde f_{k_1,k_2,x}(y):=&
\prod_{i<j}\left(\frac{y_i+y_j+2\sqrt{2 k_1}}{2\sqrt{2 k_1}}\right)^{2k_2}
\cdot J^B_{(k_1,k_2)}(x,y+\sqrt{2 k_1}\cdot\mathbf{1})
\times
\notag\\
&\quad \times
exp\left(-\|y\|^2/2-\sqrt{2 k_1}\cdot \sum_{i=1}^{N}y_i 
+2k_1\sum _{i=1}^N 
\ln(1+\frac{y_i}{\sqrt{2 k_1}})\right)
\notag\end{align}
on the shifted cone $C^B_N-\sqrt{2 k_1}\cdot\mathbf{1}$, where $f^B_{(k_1,k_2)}(y)=0$
 otherwise on $\mathbb{R}^N$.
Here $c(k_1,k_2,x,N)>0$ is some constant which may be computed explicitly.

As in the proofs of the preceding CLTs we conclude  from the Taylor expansion of $\ln(1+z)$ that
$$\ln \left(1+\frac{y_i}{\sqrt{2k_1}}\right)=
 \frac{y_i}{\sqrt{2k_1}}-\frac{y_i^2}{4k_1}+O(k_1^{-3/2}).$$
This shows that
\begin{equation}\label{powerseries2}
exp\left(-\sqrt{2 k_1}\cdot \sum_{i=1}^{N}y_i 
+2k_1\sum _{i=1}^N 
\ln(1+\frac{y_i}{\sqrt{2 k_1}})\right)=exp\left(-\sum _{i=1}^N \frac{y_i^2}{2} +O(k_1^{-1/2})\right). 
\end{equation}
Moreover, we observe from  Lemma \ref{B-A convergence} and  Eq.~(\ref{bessel-reduction-A})  that
 \begin{align} \label{Bessel function approximation}
 \lim_{k_1\to\infty}J^B_{(k_1,k_2)}(x,y+\sqrt{2k_1}\cdot\mathbf{1})
=& \lim_{k_1\to\infty}J_{(k_1,k_2)}^B(x, \sqrt{2k_1}(\mathbf{1}+y/\sqrt{2k_1}))\notag\\
 =&J^A_{k_2}(x^2,\frac{1}{2}\cdot \mathbf{1})= e^{\|x\|^2/2 }.
 \end{align}
 In summary, we obtain that for all $x\in C_N^B$ and all $y$  in the limit set $C_N^A$ of the shifted cones
$ C_N^B-\sqrt{2k_1}\cdot \mathbf{1}$ for $k_1\to\infty$ that
$$\lim_{k_1\to\infty}\tilde f_{k_1,k_2,x}(y)=e^{\|x\|^2/2 -\|y\|^2 }.$$
We next check with dominated convergence  that
 for any  $h\in C_b(\mathbb{R}^n)$,
\begin{align}\label{density-b-limit-3a}
\lim_{k_1\to\infty}&\int_{ C_N^B-\sqrt{2k_1}\cdot \mathbf{1}}
 h(y)\cdot\prod_{i<j}(y_i-y_j)^{2k_2}\cdot\tilde f_{k_1,k_2,x}(y)\> dy \\
&=\int_{C_N^A} h(y)\cdot\prod_{i<j}(y_i-y_j)^{2k_2}\cdot e^{\|x\|^2/2 -\|y\|^2 } \> dy. 
\notag\end{align}
In order to check the assumptions of dominated convergence,
 we first again notice that for all $y\in C^B_N$  by Lagrange remainder there exists
 $w\in [0,1]$ such that 
\begin{equation}\label{Lagrange}
\ln \left(1+\frac{y_i}{\sqrt{2k_1}}\right)= \frac{y_i}{\sqrt{2k_1}}-\frac{y_i^2}{4k_1}\cdot w
\end{equation}
which ensures that
$$exp\left(-\|y\|^2/2-\sqrt{2 k_1}\cdot \sum_{i=1}^{N}y_i 
+2k_1\sum _{i=1}^N 
\ln(1+\frac{y_i}{\sqrt{2 k_1}})\right)$$
can be bounded above by  $ e^{-\|y\|^2/2}$.
Moreover, as in Section 3, we know from \cite{RV2} that the  Bessel functions $J_{(k_1,k_2)}^B$ 
admit the estimate
\begin{equation}
J_{(k_1,k_2)}^B(x,y+\sqrt{2k_1}\cdot \mathbf{1})\le 
\exp(\langle x,y+\sqrt{2k_1}\cdot \mathbf{1}\rangle). 
\end{equation}
This shows that
\begin{equation}\label{est-j-b2}
J_{(k_1,k_2)}^B(x,y+\sqrt{2k_1}\cdot\mathbf{1} )\le e^{2\langle x,y\rangle} 
\quad\quad\text{for}\>\> K_1>0,\> y\in C_N^B
\>\> \text{with }\>\>  \langle x,y\rangle\ge \langle x,\sqrt{2k_1}\cdot \mathbf{1} \rangle.
\end{equation}
On the other hand, as the density $\tilde f_{k_1,k_2,x}(y)$ is positive for $y$ only with 
 $ y/\sqrt{2k_1}+ \mathbf{1}    \in C_N^B$, and as
the set 
$$\{w\in  C_N^B:\> \langle x,w\rangle\le \langle x,\mathbf{1} \rangle\}$$
is compact, we obtain from Lemma 
\ref{B-A convergence}
that
$$\sup_{y\in\mathbb  R^N:\>  y/\sqrt{2k_1}+ \mathbf{1}\in C_N^B, \langle x, y/\sqrt{2k_1}\rangle\le \langle x,\mathbf{1} \rangle}
J_{(k_1,k_2)}^B(x,\sqrt{2k_1}(y/\sqrt{2k_1}+\mathbf{1} ))$$
is bounded.
Furthermore, 
$$\prod_{i<j}\left(\frac{y_i+y_j+2\sqrt{2 k_1}}{2\sqrt{2 k_1}}\right)^{2k_2}$$
can be estimated from above by some polynomial in $y$
 independent of  $k_1\ge1$,
 and the additional factor $\prod_{i<j}(y_i-y_j)^{2k_2}$ is also polynomially growing 
in $y$. Taking all upper estimates into account, we find an upper bound of the form
$  e^{-\|y\|^2/2} P(y)$ with some polynomial $P$. This ensures by dominated convergence that 
(\ref{density-b-limit-3a}) is in fact correct for all $h\in C_b(\mathbb{R}^n)$.

As the right hand side density
$$\prod_{i<j}(y_i-y_j)^{2k_2}\cdot e^{\|x\|^2/2 -\|y\|^2 }\quad\quad (y\in C_N^A)$$
in (\ref{density-b-limit-3a}) is the density of the distribution of
 $X^A_{1/2,k_2}$ (with start in the origin) up
 to normalization constants depending on $k_2,x,N$, we conclude for $h=1$,
that also the constants converge as needed for the theorem, and that the theorem holds.
 \end{proof}

\begin{remark}
We expect that the CLT \ref{Theorem-b-2} can be derived for Bessel processes $(X_{t,(k_1,k_2)})_{t\ge0}$ 
of type B with start in the origin $x=0$ also from the tridiagonal matrix models of Dumitriu and Edelman \cite{DE1} similar to the proofs
of the CLTs in  \cite{DE2} which correspond to the CLTs in Sections 1 and 2 above for $x=0$.
\end{remark}

\begin{remark}
The CLT \ref{Theorem-b-2}  for the starting point 0 should be compared with the CLT 4.1 in \cite{AV} where also Bessel processes
 $(X_{t,(k_1,k_2)})_{t\ge0}$ of type B are studied with $k_2$ fixed, $k_1\to\infty$, where there the starting points have the form 
$\sqrt{k_1}\cdot x$ with some fixed point $x$ in the interior on $C_N^B$. In Theorem 4.1 of \cite{AV},
 the limit distribution is an $N$-dimensional normal distribution with a covariance matrix depending on $x$.
 This limit is quite different from the limit in Theorem \ref{Theorem-b-2} above for $x=0$. 
This means that that the assertion of  Theorem 4.1 of \cite{AV} cannot be extended ``continuously'' 
from the interior of  $C_N^B$ to the origin.
 This is also clear by simple geometric considerations about the support of the limit measure.
It might be interesting to study CLTs similar to  Theorem 4.1 of \cite{AV} with starting points of the form $\sqrt{k_1}\cdot x$
with $x$ on the boundary of  $C_N^B$, but $x\ne0$.
\end{remark}

For particular values of $k_2$, namely $k=1/2,1,2$, Theorem  \ref{Theorem-b-2} above has some matrix-theoretic, or 
``geometric'', background:

\begin{remark}\label{remark-B2-long}
Fix one of the (skew-)fields $\mathbb F=\mathbb R, \mathbb C, \mathbb H$  with the real dimension $d=1,2,4$ respectively. For
integers $p\in\mathbb N$ consider the vector spaces $M_{p,N}(\mathbb F)$ of all $p\times N$-matrices over $\mathbb F$ with
 the real dimension $dpN$.
Choose the standard bases on these vector spaces such that we have  $d$ basis vectors in each entry.
Consider the $dpN$-dimensional associated Brownian motion
 $(B_t^p)_{t\ge0}$ on  $M_{p,N}(\mathbb F)$ starting in the origin. If we write $A^*:=\overline A^T\in M_{N,p}(\mathbb F)$ for matrices
 $A\in M_{p,N}(\mathbb F)$
 with the usual conjugation on $\mathbb F$, then the process $(Z_t^p:= (B_t^p)^*B_t^p)_{t\ge0}$ becomes 
a Wishart process on the closed cone $\Pi_N(\mathbb F)$
of all $N\times N$ positive semidefinite matrices over  $\mathbb F$  with shape parameter $p$; see \cite{Bru,DDMY} and references there
for details on Wishart processes.

Consider the spectral mapping $\sigma_N:\Pi_N(\mathbb F)\to C_N^B$ which assigns to each  matrix in $\Pi_N(\mathbb F)$
 its  ordered spectrum.
It is well-known that then $(\sqrt{\sigma_N(Z_t^p)})_{t\ge0}$ is a Bessel process on  $C_N^B$ of type $B_N$ with multiplicities
$$(k_1,k_2):=((p-N+1)\cdot d/2, d/2)$$
 where the symbol $\sqrt{.}$ means taking square roots in each component;
 see e.g. \cite{BF,R3} for details.

We thus conclude that Theorem  \ref{Theorem-b-2} for $k_2=1/2, 1,2$ corresponds to a CLT for Wishart distributions on  $\Pi_N(\mathbb F)$
with fixed time parameters where the shape parameters $p$ tend to $\infty$. To explain this CLT on the level of matrices,
we recapitulate that the distributions
 $\mu_t^p:=P_{Z_t^p}\in M^1(\Pi_N(\mathbb F))$ of $Z_t^p$ satisfy $\mu_t^{p_1}*\mu_t^{p_2}=\mu_t^{p_1+p_2}$ for $p_1,p_2\in\mathbb N$ with
 the usual convolution of measures on the vector space $\mathbb H_N(\mathbb F)$ 
of all $N\times N$ Hermitian matrices over $\mathbb F$ by the very construction of the 
random variables $Z_t^p$. Moreover, this convolution relation even remains valid  for all $p\in]0,\infty[$ which are sufficiently large. 
We thus may apply the classical LLNs and CLT for sums of iid random variables on finitely dimensional vector spaces to obtain
 LLNs and a CLT for Wishart distributions for $p\to\infty$.
A short computation in this setting for the CLT shows that here the centering (on the vector space
 $\mathbb H_N(\mathbb F)$) is performed with a multiple of the identity matrix, and that the associated centered limit normal distributions  
of dimension $\dim_{\mathbb R} \mathbb H_N(\mathbb F)$ have a simple shape. Using these data it can be shown that the image measure
of these  centered limit normal distributions on $\mathbb H_N(\mathbb F)$ under the spectral map 
 $\sigma_N:\mathbb H_N(\mathbb F)\to C_N^A$ is just a distribution of type A on $C_N^A$ as in Theorem  \ref{Theorem-b-2} above.
 We here omit further details.

Nevertheless, this reamrk shows that for $k_2=1/2, 1,2$,   Theorem  \ref{Theorem-b-2} corresponds to a CLT for Wishart distributions 
on $\mathbb H_N(\mathbb F)$ 
where the shape parameters tend to $\infty$.
\end{remark}

We finally remark that in the setting of Remark \ref{remark-B2-long},  
   Theorem  \ref{Theorem-b-2} for $k_2=1/2, 1,2$ is also related to limit theorems
 for radial random walks $(X_n^p)_{n\ge0}$ 
on  the vector spaces $M_{p,N}(\mathbb F)$ and their projections to $\Pi_N(\mathbb F)$ and $C_N^B$,
when the dimension parameter $p$ as well as the time parameter $n$ tend to $\infty$ in a 
coupled way; see \cite{G,RV4,V}.

As in the end of Section 3, Theorem \ref{Theorem-b-2} can be eaily extended from arbitrary, but fixed starting points
to arbitrary starting distributions on  $C_N^B$. 
 As the proof is the same as for Corollary \ref {corr-general-starting-b}, we omit the proof.

\begin{corollary}\label{corr-general-starting-b2}
Let $\mu\in  M^1( C_N^B)$ be an arbitrary starting distribution. Consider the Bessel processes
$(X_{t,(k_1,k_2)})_{t\ge0}$ of type B on  $C_N^B$ with starting distribution $\mu$.
Then, for all  $k_2>0$ and $t>0$, 
 $$ X_{t,(k_1,k_2)}-\sqrt{2t \cdot k_1}\cdot\mathbf{1}$$ 
converges  for $k_1\to \infty$ in distribution to
 $X^A_{t/2,k_2}$, where $(X^A_{s,k_2})_{s\ge 0}$ is a Bessel process of type A starting in the origin.
\end{corollary}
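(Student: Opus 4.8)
The plan is to reduce the statement to the fixed-starting-point result of Theorem \ref{Theorem-b-2} by conditioning on the starting point and then passing to the limit under the outer integration against $\mu$ by dominated convergence, exactly as in the proof of Corollary \ref{corr-general-starting-b}. The decisive structural feature that makes this work is that the limit $X^A_{t/2,k_2}$ in Theorem \ref{Theorem-b-2} does \emph{not} depend on the starting point $x\in C_N^B$.

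First I would fix $t>0$ and a bounded continuous test function $h\in C_b(\mathbb R^N)$, and denote by $P_{k_1}\in M^1(\mathbb R^N)$ the distribution of $X_{t,(k_1,k_2)}-\sqrt{2t\cdot k_1}\cdot\mathbf 1$ when the process starts with distribution $\mu$. Using the transition kernels $K_{t,(k_1,k_2)}$ from (\ref{density-general-b}) together with Fubini's theorem, I would write
\begin{align}
\int_{\mathbb R^N} h\> dP_{k_1}
&= \int_{C_N^B}\Bigl(\int_{C_N^B} h(y-\sqrt{2t\cdot k_1}\cdot\mathbf 1)\> K_{t,(k_1,k_2)}(x,dy)\Bigr)\> d\mu(x)\notag\\
&=: \int_{C_N^B} (T_{k_1}h)(x)\> d\mu(x),\notag
\end{align}
so that the inner integral $(T_{k_1}h)(x)$ is precisely the expectation of $h$ against the distribution of $X_{t,(k_1,k_2)}-\sqrt{2t\cdot k_1}\cdot\mathbf 1$ for the process started in the fixed point $x\in C_N^B$.

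Next, for each fixed $x\in C_N^B$, Theorem \ref{Theorem-b-2} yields $(T_{k_1}h)(x)\to \int_{\mathbb R^N} h\> dP_{X^A_{t/2,k_2}}$ as $k_1\to\infty$, where the right-hand side is a number independent of $x$. Since moreover $\|T_{k_1}h\|_\infty\le\|h\|_\infty<\infty$ uniformly in $k_1$, dominated convergence applied to the outer $\mu$-integral would give
$$\int_{\mathbb R^N} h\> dP_{k_1}\longrightarrow \int_{\mathbb R^N} h\> dP_{X^A_{t/2,k_2}}\qquad(k_1\to\infty)$$
for every $h\in C_b(\mathbb R^N)$, which is exactly the asserted convergence in distribution.

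The point is that essentially all of the analytic work has already been carried out in Theorem \ref{Theorem-b-2}: no new estimates for the Bessel functions $J^B_{(k_1,k_2)}$ or for the shifted densities are needed here. The only genuine step beyond that theorem is the interchange of the limit $k_1\to\infty$ with the integration against $\mu$, and this interchange is immediate from the uniform bound $\|T_{k_1}h\|_\infty\le\|h\|_\infty$ combined with the crucial fact that the pointwise limit is independent of the starting point. Consequently the main obstacle is not located in this corollary at all, but rather in Theorem \ref{Theorem-b-2} itself.
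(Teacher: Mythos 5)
Your proposal is correct and follows exactly the argument the paper intends: it omits the proof of this corollary precisely because it is the same disintegration-plus-dominated-convergence argument given for Corollary \ref{corr-general-starting-b}, resting on the uniform bound $\|T_{k_1}h\|_\infty\le\|h\|_\infty$ and the fact that the limit in Theorem \ref{Theorem-b-2} is independent of the starting point. No discrepancies.
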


\section{A central limit theorem for the root system $D_{N}$}

We next briefly study limit theorems for Bessel processes of type $D_N$. We recapitulate that the root system
is given here by 
$$D_N=\{\pm e_1\pm e_j: \quad 1\le i<j\le N\}$$
with associated closed Weyl chamber 
$$C_N:=C_N^D=\{x\in\mathbb R^N: \quad x_1\ge \ldots\ge x_{N-1}\ge |x_N|\}.$$
$C_N^D$ may be seen as a doubling of $C_N^B$ w.r.t.~the last coordinate. We have a one-dimensional multiplicity $k\ge0$.
The generator of the transition semigroup of the Bessel process  $(X_{t,k})_{t\ge0}$ of type D is
\begin{equation}\label{def-L-D} Lf:= \frac{1}{2} \Delta f +
 k \sum_{i=1}^N \sum_{j\ne i} \Bigl( \frac{1}{x_i-x_j}+\frac{1}{x_i+x_j}  \Bigr)
 \frac{\partial}{\partial x_i}f, \end{equation}
where we again assume reflecting boundaries. Similar to the preceding cases, we have the transition probabilities
\begin{equation}\label{density-general-d}
K_{t,k}(x,A)=c_k^D \int_A \frac{1}{t^{\gamma_D+N/2}} e^{-(\|x\|^2+\|y\|^2)/(2t)} J_k^D(\frac{x}{\sqrt{t}}, \frac{y}{\sqrt{t}}) 
\cdot w_k^D(y)\> dy
\end{equation}
with
$$w_k^D(x):= \prod_{i<j}(x_i^2-x_j^2)^{2k}, \quad  \gamma_D:= kN(N-1);  $$
see \cite{Dem} for further details on this root system.
Furthermore, using the normalization (\ref{const-b}) for $k_2=k, k_1=0$, we see that the  normalization constant $c_k^D$ is given by
\begin{align}\label{const-d}
 c_k^D:=& \Bigl(\int_{C_N^D}  e^{-\|y\|^2/2} w_k^D(y) \> dy\Bigr)^{-1} \\
=&\frac{N!}{2^{N(N-1)k-N/2+1}} \cdot\prod_{j=1}^{N}\frac{\Gamma(1+k)}{\Gamma(1+jk)\Gamma(\frac{1}{2}+(j-1)k)}. 
\notag\end{align}

We now proceed similar to Section 4 of \cite{AV}.
Using the known explicit representation
$$L_N^{(\alpha)}(x):=\sum_{k=0}^N { N+\alpha\choose N-k}\frac{(-x)^k}{k!}$$
of the Laguerre polynomials according to (5.1.6) of \cite{S}, we can form the polynomial  $L_N^{(-1)}$ of order $N\ge1$
where,
 by (5.2.1) of \cite{S}, 
\begin{equation}\label{laguerre-1}
L_N^{(-1)}(x)=-\frac{x}{N}L_{N-1}^{(1)}(x).
\end{equation}
Continuity arguments thus show that the equivalence of (2) and (3) of
 Lemma \ref{char-zero-B1} remains valid also for $\nu=0$  by using the
$N$ different zeros $z_1>\ldots>z_N=0$ of $L_N^{(-1)}$.
With these notations we have the following fact which is similar to the results above in the cases A and B;
 see Section 4 of \cite{AV}:

\begin{lemma}\label{char-zero-D}
 For $r\in C_N^D$, the following statements  are equivalent:
\begin{enumerate}
\item[\rm{(1)}] The function 
$W_D(y):=2\sum_{ i<j} \ln(y_i^2-y_j^2) -\|y\|^2/2$
 is maximal in $ r\in C_N^B$;
\item[\rm{(2)}] $r_N=0$, and
for $i=1,\ldots,N-1$, 
$$4 \sum_{j: j\ne i} \frac{1}{r_i^2-r_j^2} =1;$$ 
\item[\rm{(3)}] If $z_1^{(1)}>\ldots>z_{N-1}^{(1)}>0$ are the $N-1$ ordered zeros of 
 the classical  Laguerre polynomial $L_{N-1}^{(1)}$, then 
\begin{equation}\label{y-max-D}
2\cdot (z_1^{(1)},\ldots, z_{N-1}^{(1)},0)= (r_1^2,\ldots,r_N^2).
\end{equation}
\end{enumerate}
\end{lemma}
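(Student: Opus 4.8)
The plan is to prove the two equivalences (1)$\Leftrightarrow$(2) and (2)$\Leftrightarrow$(3) separately, reading the whole statement as the degenerate case $\nu=0$ of Lemma \ref{char-zero-B1}: indeed $W_D$ is exactly the function $W_B$ of that lemma with the term $2\nu\sum_i\ln y_i$ deleted. Since $W_D$ depends on $y$ only through the squares $y_i^2$, it is even in each coordinate, so its maxima over $C_N^D$ coincide up to the sign of the last entry with those over $C_N^B$, and it is natural to pass to the variables $u_i:=y_i^2$, under which $W_D$ becomes $\tilde W(u):=2\sum_{i<j}\ln(u_i-u_j)-\frac12\sum_i u_i$ on the convex domain $\{u_1>\dots>u_{N-1}>u_N\ge 0\}$.

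For (1)$\Leftrightarrow$(2) I would first note that the Vandermonde energy $\sum_{i<j}\ln(u_i-u_j)$ is concave on this domain and that the linear term destroys its translation invariance, so $\tilde W$ possesses a unique maximizer; the logarithmic terms force this maximizer into the open region $u_1>\dots>u_N$, so that the only constraint which can be active is $u_N\ge 0$. Computing $\partial_{u_i}\tilde W=2\sum_{j\ne i}(u_i-u_j)^{-1}-\frac12$ and imposing the Kuhn--Tucker conditions, I would observe that for the smallest index the sum $\sum_{j\ne N}(u_N-u_j)^{-1}$ is strictly negative, whence $\partial_{u_N}\tilde W<0$ and therefore $u_N=0$, i.e. $r_N=0$; for $i=1,\dots,N-1$ stationarity $\partial_{u_i}\tilde W=0$ rewrites as $4\sum_{j\ne i}(r_i^2-r_j^2)^{-1}=1$, which is precisely (2). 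The converse follows from the uniqueness of the maximizer.

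For (2)$\Leftrightarrow$(3) I would substitute $r_i^2=2z_i$ for $i=1,\dots,N-1$ and split off the $j=N$ term in (2) using $r_N=0$; this turns $4\sum_{j\ne i}(r_i^2-r_j^2)^{-1}=1$ into the Stieltjes equilibrium equations $\sum_{j\ne i}(z_i-z_j)^{-1}=\frac12-z_i^{-1}$. These are exactly the equations satisfied by the zeros of $L_{N-1}^{(1)}$: differentiating the Laguerre differential equation $xy''+(2-x)y'+(N-1)y=0$ at a zero $z_i$ and using $y''(z_i)/y'(z_i)=2\sum_{j\ne i}(z_i-z_j)^{-1}$ yields the same system. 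As this system has a unique ordered solution, the $z_i$ must be the zeros of $L_{N-1}^{(1)}$, which is (3), and the reverse implication is the same computation read backwards.

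The step I expect to be the main obstacle is the degeneracy at $r_N=0$. In the genuine B-case $\nu>0$ of Lemma \ref{char-zero-B1} the term $\nu/r_i$ repels every coordinate from the origin, all $r_i$ stay positive, and one divides the gradient equation by $r_i$; here the smallest coordinate sits on the boundary $u_N=0$, so the stationarity condition for $i=N$ must be replaced by the Kuhn--Tucker inequality and that division is no longer licit. I would reconcile this with Lemma \ref{char-zero-B1} through the identity $L_N^{(-1)}(x)=-\frac{x}{N}L_{N-1}^{(1)}(x)$ of (\ref{laguerre-1}): the nonzero zeros of $L_N^{(-1)}$ are exactly those of $L_{N-1}^{(1)}$, with the extra zero at $x=0$ accounting for $r_N=0$, so the $\nu=0$ specialization of Lemma \ref{char-zero-B1} delivers (3) directly. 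A quick check at $N=2$, where $L_1^{(1)}(x)=2-x$ forces $r=(2,0)$ and hence $r_1^2=4=2z_1^{(1)}$, confirms the normalization.
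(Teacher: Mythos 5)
Your proof is correct, but it follows a genuinely different route from the paper's. The paper gives no self-contained argument for this lemma: it obtains the equivalence of (2) and (3) by a continuity argument, letting $\nu\to 0$ in Lemma \ref{char-zero-B1} and using the factorization $L_N^{(-1)}(x)=-\frac{x}{N}L_{N-1}^{(1)}(x)$ from (\ref{laguerre-1}) to identify the limiting zeros $z_1>\dots>z_N=0$ of $L_N^{(-1)}$ with the zeros of $L_{N-1}^{(1)}$ together with $0$, and it refers the rest to Section 4 of \cite{AV}. You instead argue directly: the substitution $u_i=y_i^2$, concavity of the resulting functional on the convex cone, and the Kuhn--Tucker inequality at the constraint $u_N\ge 0$ give (1)$\Leftrightarrow$(2) including the boundary statement $r_N=0$, while the Stieltjes electrostatic relations derived from the Laguerre equation $xy''+(2-x)y'+(N-1)y=0$, combined with uniqueness of the ordered solution of that system, give (2)$\Leftrightarrow$(3). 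What your approach buys is exactly the point the paper's sketch passes over: at $\nu=0$ the condition (2) of Lemma \ref{char-zero-B1} degenerates to $0=0$ at the coordinate $r_N$, so the assertion $r_N=0$ does not come for free from continuity in the stationarity equations and requires the boundary analysis you supply; your argument is also self-contained where the paper leans on \cite{AKM2} and \cite{AV}. Conversely, the paper's route is shorter and keeps the D-case formally parallel to the B-case machinery already set up. Your reconciliation via (\ref{laguerre-1}) and the check at $N=2$ agree with the paper's intended mechanism, so the two treatments are consistent.
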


We now turn to the  main result of this section:

\begin{theorem}\label{clt-main-D}
Consider the  Bessel processes $(X_{t,k})_{t\ge0}$ of type $D_N$ on $C_N^D$ with multiplicity $k>0$ which start in $0$.
Then, for the vector $r\in C_N^D$ of Lemma \ref{char-zero-D},
$$\frac{X_{t,k}}{\sqrt t} -  \sqrt{k }\cdot r$$
converges for $k\to\infty$ to the centered $N$-dimensional distribution $N(0,t\cdot \Sigma)$
with the regular covariance matrix $\Sigma$ with  $\Sigma^{-1}=(s_{i,j})_{i,j=1,\ldots,N}$ with
\begin{equation}\label{covariance-matrix-D}
s_{i,j}:=\left\{ \begin{array}{r@{\quad\quad}l}  1+ 2\sum_{l\ne i} (r_i-r_l)^{-2}+2\sum_{l\ne i} (r_i+r_l)^{-2}   &
 \text{for}\quad i=j \\
 2(r_i+r_j)^{-2}  -2(r_i-r_j)^{-2} & \text{for}\quad i\ne j  \end{array}  \right.  . 
\end{equation}
\end{theorem}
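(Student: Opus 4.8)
The plan is to run the argument of Theorems~\ref{clt-main-a} and~\ref{clt-main-b1} for the weight $w_k^D(y)=\prod_{i<j}(y_i^2-y_j^2)^{2k}$, which is the B-situation with parameter $\nu=0$ together with the degenerate coordinate $r_N=0$ of Lemma~\ref{char-zero-D}. Two points make this case lighter than Theorem~\ref{clt-main-b1}: the process starts in $0$, so $J_k^D(0,\cdot)=1$ and no Bessel function occurs in the density (Lemma~\ref{asymptotic-bessel-B1} and the Bessel estimates of the B-proof are not needed); and there is no factor $\prod_i y_i^{2k}$, hence no terms $\nu/r_i$. As in the earlier proofs the space-time scaling of~(\ref{density-general-d}) reduces the claim to $t=1$. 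For start in $0$ the variable $X_{1,k}$ has density $c_k^D\,e^{-\|y\|^2/2}\prod_{i<j}(y_i^2-y_j^2)^{2k}$ on $C_N^D$, so $X_{1,k}-\sqrt k\cdot r$ has a density $f_k^D$ on $C_N^D-\sqrt k\cdot r$. I factor each pair as
\[
(y_i+\sqrt k\,r_i)^2-(y_j+\sqrt k\,r_j)^2=k(r_i^2-r_j^2)\Bigl(1+\tfrac{y_i+y_j}{\sqrt k(r_i+r_j)}\Bigr)\Bigl(1+\tfrac{y_i-y_j}{\sqrt k(r_i-r_j)}\Bigr),
\]
which is valid since the $r_i$ are pairwise distinct with $r_i+r_j>0$; for the pairs $(i,N)$ one has $r_N=0$, so $r_i+r_N=r_i-r_N=r_i>0$ and no singularity arises. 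I then split $f_k^D=\tilde c_k\cdot h_k(y)$ with $\tilde c_k$ independent of $y$.

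The substance is the analysis of $h_k$. Expanding $\ln(1+\cdot)$ to second order, the $O(\sqrt k)$ linear contributions combine with $-\sqrt k\,\langle y,r\rangle$, and the coefficient of each $y_m$ equals $\sqrt k\,r_m\bigl(-1+4\sum_{l\ne m}(r_m^2-r_l^2)^{-1}\bigr)$; this is $0$ for $m<N$ by part~(2) of Lemma~\ref{char-zero-D} and $0$ for $m=N$ because $r_N=0$. Hence the linear terms cancel identically and
\[
h_k(y)\longrightarrow \exp\Bigl(-\tfrac12\|y\|^2-\sum_{i<j}\tfrac{(y_i+y_j)^2}{(r_i+r_j)^2}-\sum_{i<j}\tfrac{(y_i-y_j)^2}{(r_i-r_j)^2}\Bigr);
\]
expanding this quadratic form recovers precisely $-\tfrac12\sum_{i,j}s_{i,j}y_iy_j$ with the matrix of~(\ref{covariance-matrix-D}), the entries $s_{i,N}$ and $s_{N,N}$ staying finite because they involve only $r_l^{-2}$ with $l<N$.

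To pass to the limit I would justify dominated convergence: writing the second-order term of $\ln(1+\cdot)$ in Lagrange form gives a nonnegative remainder, so after the exact cancellation of the linear parts one has the uniform bound $h_k(y)\le e^{-\|y\|^2/2}$, which is integrable and needs no Bessel estimate since $J_k^D(0,\cdot)=1$. Taking $f\equiv1$ then shows $\tilde c_k=(\int h_k)^{-1}$ converges to a finite positive limit, so the normalisation of the limiting Gaussian is automatic and $\int f\,f_k^D\,dy$ converges to the integral of $f$ against $N(0,\Sigma)$ for every $f\in C_b(\mathbb R^N)$; the general case $t>0$ follows by the scaling used above. If one wants $\tilde c_k$ explicitly (for a determinant corollary in the spirit of Corollaries~\ref{corr-determinant} and~\ref{corr-determinant-b}), one writes $\tilde c_k=c_k^D\,k^{kN(N-1)}\exp\!\bigl(k\,W_D(r)\bigr)$ and evaluates $W_D(r)=2\sum_{i<j}\ln(r_i^2-r_j^2)-\tfrac12\|r\|^2$ by applying~(\ref{equality-F-B}) to $N-1$ particles with $\nu=2$: by Lemma~\ref{char-zero-D}(3) the nonzero entries $(r_1,\dots,r_{N-1})$ are exactly the B-critical vector attached to $L_{N-1}^{(1)}$, and since $r_N=0$ the value $W_D(r)$ coincides with the left-hand side of~(\ref{equality-F-B}) in that case; Stirling's formula applied to $c_k^D$ from~(\ref{const-d}) then closes the computation.

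The main obstacle is entirely the degenerate coordinate $r_N=0$: one has to check that it introduces no singularity in the pairwise factorisation, in the cancellation of the linear terms, or in the covariance matrix, which the computations above confirm. A secondary point, relevant only if the explicit constant is desired, is that $W_D(r)$ cannot be obtained from a naive $\nu\to0$ limit of~(\ref{equality-F-B}) because of the $\ln r_N^2$ singularity, and must instead be read off from the parameter-shifted B-identity as indicated.
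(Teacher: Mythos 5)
Your proposal is correct and follows essentially the same route as the paper: reduce to $t=1$ by scaling, split the shifted density into a constant and a $y$-dependent factor, expand the logarithms so that the linear terms cancel via Lemma \ref{char-zero-D}(2) (with the $m=N$ coefficient vanishing because $r_N=0$), and conclude by dominated convergence plus the automatic-normalization argument. Your extra observations — the explicit check that $r_N=0$ causes no singularity, the bound $h_k(y)\le e^{-\|y\|^2/2}$ from the sign of the Lagrange remainder, and the identification of $W_D(r)$ with the $(N-1,\nu=2)$ instance of (\ref{equality-F-B}) — are all consistent with (and slightly more detailed than) what the paper does.
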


\begin{proof} As in the preceding cases we  assume that $t=1$ without loss of generality.
$X_{1,k}$ has the  density
$$c_k^D e^{-\|y\|^2/2}
\cdot exp\Bigl( 2k \sum_{i<j}\ln(y_i^2-y_j^2) \Bigr)$$
on $C_N^D$. Hence, $X_{1,k}-\sqrt{k}\cdot r$ has the density
\begin{align}\label{d-density-detail}
f_k^D(y):=&c_k^D e^{-\|y+\sqrt{k}\cdot r \|^2/2}
 exp\Bigl(2k \sum_{i<j}\ln\bigl((y_i+\sqrt{k}\cdot r_i)^2- (y_j+\sqrt{k}\cdot r_j)^2\bigr)\Bigr)\notag\\
=c_k^D &
  exp\Bigl(2k \sum_{i<j}\ln\bigl(1+ \frac{y_i-y_j}{\sqrt{k}(r_i-r_j)}\bigr)+
2k \sum_{i<j}\ln\bigl(1+ \frac{y_i+y_j}{\sqrt{k}(r_i+r_j)}\bigr)\Bigr) \times \notag\\
&  e^{-\|y\|^2/2} e^{-k\|r\|^2/2}  e^{-\sqrt{k} \langle y,r\rangle} exp\Bigl(2k \sum_{i<j}\Bigl(\ln(\sqrt{k}( r_i-r_j)) +\ln(\sqrt{k}( r_i+r_j))\Bigr)\Bigr)
 \notag
\end{align}
on the shifted cone $C_N^D-\sqrt{k}\cdot r$ with $f_k^D(y)=0$ otherwise on $\mathbb R^N$.
We again split $f_k^D(y)$ into 
$$f_k^D(y)= \tilde c_k \cdot h_k(y),$$
 where $ h_k(y)$ depends on $y$, and 
 $\tilde c_k$   is  constant w.r.t.~$y$.
The part depending on $y$ is given by
\begin{equation}h_k(y):=  exp\Bigl(-\|y\|^2/2 -\sqrt{k} \langle y,r\rangle +
2k \sum_{i<j}\Bigl(\ln\bigl(1+ \frac{y_i-y_j}{\sqrt{k}(r_i-r_j)}\bigr)+\ln\bigl(1+ \frac{y_i+y_j}{\sqrt{k}(r_i+r_j)}\bigr)\Bigr)\Bigr).
\notag\end{equation}
The expansion of $\ln(1+x)$ together with Lemma \ref{char-zero-D}      yields as in the proof of Theorem \ref{clt-main-b1} that
\begin{equation}\label{summary-d}
\lim_{k\to\infty}h_k(y)= exp\Bigl(-\frac{\|y\|_2^2}{2}-\sum_{i<j}\frac{(y_i-y_j)^2}{(r_i-r_j)^2} 
-\sum_{i<j}\frac{(y_i+y_j)^2}{(r_i+r_j)^2}\Bigr).
\end{equation}
Moreover, as in the proofs of  Theorems \ref{clt-main-b1} and \ref{clt-main-a}, we see that for all  $f\in C_b(\mathbb R^N)$,
\begin{align}\label{density-d-limit-2}
\lim_{k\to\infty}&\int_{\mathbb R^N} f(y)\cdot h_k(y)\> dy = 
 \\
=
& \int_{\mathbb R^N} f(y) \> exp\Bigl(-\Bigl(\frac{\|y\|^2}{2}+\sum_{i<j}\frac{(y_i-y_j)^2}{(r_i-r_j)^2}
+\sum_{i<j}\frac{(y_i+y_j)^2}{(r_i+r_j)^2}
\Bigr)\Bigr) \> dy.\notag
\end{align}
This implies the theorem as in the proof of  Theorem \ref{clt-main-b1}.
\end{proof}

Let $(X_{t,k}^D)_{t\ge0}$ be a Bessel process of type D with multiplicity $k\ge0$ on the chamber $C_N^D$. Then the process
$(X_{t,k}^B)_{t\ge0}$ with 
$$X_{t,k}^{B,i}:= X_{t,k}^{D,i} \quad(i=1,\ldots,N-1), \quad X_{t,k}^{B,N}:= |X_{t,k}^{D,N} |$$
is a Bessel process of type B with the multiplicity $(k_1,k_2):=(0,k)$. This follows easily from a comparison of the
corresponding generators and was also observed in \cite{AV}. 
The central limit theorem \ref{clt-main-D} for $(X_{t,k}^D)_{t\ge0}$ thus leads to the following ``one-sided CLT'' for
 Bessel processes of type B with the multiplicities $(0,k)$ for $k\to\infty$:

\begin{corollary}\label{clt-main-b-one-sided}
Consider the  Bessel processes $(X_{t,k})_{t\ge0}$ of type $B_N$ on $C_N^B$ with multiplicities $(0,k)$  which start in $0$.
Then, for the vector $r$ of Lemma \ref{char-zero-D} on the boundary of  $C_N^B$,
$$\frac{X_{t,k}}{\sqrt t} -  \sqrt{k }\cdot r$$
converges for $k\to\infty$ in distribution to the ``one-sided normal distribution'' on the half space 
$$H_N:=\{x\in\mathbb R^N: x_N\ge0\}$$
which appears as image of the
centered $N$-dimensional distribution $N(0,t\cdot \Sigma)$
with the regular covariance matrix $\Sigma$ as in Theorem  \ref{clt-main-D} under the mapping
$$\mathbb R^N  \longrightarrow H_N, \quad (x_1,\ldots,x_N)\mapsto (x_1,\ldots,x_{N-1}, |x_N|).$$
\end{corollary}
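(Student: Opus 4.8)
The plan is to exploit the pathwise identification, stated just before the corollary, between the type $D_N$ process $(X^D_{t,k})_{t\ge0}$ with multiplicity $k$ and the type $B_N$ process $(X^B_{t,k})_{t\ge0}$ with multiplicities $(0,k)$ (this $X^B_{t,k}$ being the $X_{t,k}$ of the statement). Writing $\phi:\mathbb R^N\to H_N$ for the folding map $\phi(x_1,\ldots,x_N):=(x_1,\ldots,x_{N-1},|x_N|)$, one has $X^B_{t,k}=\phi(X^D_{t,k})$ for all $t$, and both processes start in $0=\phi(0)$, matching the hypothesis. Since the convergence for the type D process is already furnished by Theorem \ref{clt-main-D}, I would simply push that limit forward through $\phi$ by the continuous mapping theorem.

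First I would record that $\phi$ is positively homogeneous of degree one, so that $\frac{X^B_{t,k}}{\sqrt t}=\phi\bigl(\frac{X^D_{t,k}}{\sqrt t}\bigr)$. The decisive observation is that the vector $r$ of Lemma \ref{char-zero-D} has last coordinate $r_N=0$ (part (2) of that lemma; this is also why $r$ lies on the boundary of $C_N^B$). Consequently, subtracting the centering $\sqrt k\cdot r$ commutes with $\phi$: for $i<N$ the $i$-th coordinate of $\frac{X^B_{t,k}}{\sqrt t}-\sqrt k\cdot r$ equals $\frac{X^{D,i}_{t,k}}{\sqrt t}-\sqrt k\, r_i$, while the $N$-th coordinate equals $\bigl|\frac{X^{D,N}_{t,k}}{\sqrt t}\bigr|=\bigl|\frac{X^{D,N}_{t,k}}{\sqrt t}-\sqrt k\, r_N\bigr|$ since $r_N=0$. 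Hence
$$\frac{X^B_{t,k}}{\sqrt t}-\sqrt k\cdot r=\phi\Bigl(\frac{X^D_{t,k}}{\sqrt t}-\sqrt k\cdot r\Bigr).$$

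By Theorem \ref{clt-main-D} the inner random vector $\frac{X^D_{t,k}}{\sqrt t}-\sqrt k\cdot r$ converges in distribution to $N(0,t\cdot\Sigma)$ as $k\to\infty$, with $\Sigma$ as in that theorem. Since $\phi$ is continuous, the continuous mapping theorem then yields that the left-hand side above converges in distribution to the image of $N(0,t\cdot\Sigma)$ under $\phi$, which is precisely the one-sided normal distribution on $H_N$ asserted in the statement; being a pushforward of a probability measure, it is automatically a probability measure. The only step I would flag as the crux is the commutation of the centering with the folding map: this is not automatic for an arbitrary shift and rests entirely on the fact $r_N=0$ from Lemma \ref{char-zero-D}. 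Everything else is a direct invocation of Theorem \ref{clt-main-D} together with the continuous mapping theorem, so no further estimates are needed.
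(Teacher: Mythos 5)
Your proposal is correct and follows exactly the route the paper intends: the paper derives the corollary from the identification $X^{B,i}_{t,k}=X^{D,i}_{t,k}$ ($i<N$), $X^{B,N}_{t,k}=|X^{D,N}_{t,k}|$ stated immediately before the corollary, combined with Theorem \ref{clt-main-D} and the continuous mapping theorem. Your explicit observation that the commutation of the centering with the folding map rests on $r_N=0$ is the one detail the paper leaves implicit, and you have supplied it correctly.
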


\section{A third central limit theorem for the root system $B_N$}

In this section we again study Bessel processes of type B with multiplicities $k=(k_1,k_2)$.
 We here fix $k_1>0$ and consider $k_2\to\infty$. As far as we know,
 this limit was not considered in the literature up to now. It will turn out that this case is closely
 related to the limits in the case D above and in particular to 
the B-case for the multiplicities $(0,k_2)$ for $k_2\to\infty$ in Corollary \ref{clt-main-b-one-sided}.

We start with a law of large numbers which corresponds to limit results of \cite{AKM1}, \cite{AKM2}
  for the root systems of type A and B. For this we fix  $k_1>0$ and consider the densities $f_{t,k}$ of 
$X_{t,k}/\sqrt{tk_2}$ of the Bessel processes $(X_{t,k})_{t\ge0}$ of type $B_N$ on $C_N^B$
 where we suppose that the processes start in the origin. Then, by the scaling properties of the $X_{t,k}$ and by
(\ref{density-general-b}), these densities are independent from $t>0$ and have the form
$$f_{1,k}(x)= const(k)\cdot\prod_{i=1}^N x_i^{2k_1}\cdot exp\bigl( k_2 W(x)\bigr)$$
with some normalization constant $const(k)>0$ (see Section 2 for the details), and with
$$W(x)= - \|x\|_2^2/2 +2\sum_{i<j}\ln(x_i^2-x_j^2) \quad\quad\text{for}\quad\text x\in C_N^B.$$
We know from Lemma \ref{char-zero-D} that on $C_N^B$, the function $W$ admits an
 unique maximum which is located at
$r=(r_1,\ldots,r_n)\in C_N^B$ with
$$(r_1^2,\ldots,r_N^2)= 2\cdot (z_1^{(1)},\ldots, z_{N-1}^{(1)},0)$$
for the $N-1$ ordered zeros
 $z_1^{(1)}>\ldots>z_{N-1}^{(1)}>0$  of  $L_{N-1}^{(1)}$.
This optimal point $r$ is in the support of the measure ${\bf 1}_{C_N^B} \cdot\prod_{i=1}^N x_i^{2k_1}\> dx$.
Some standard arguments from analysis now readily lead to the following limit law.

\begin{proposition}\label{clt-main-b-one-sided-general--lln} For each 
 $k_1>0$ and $t>0$, $X_{t,(k_1,k_2)}/\sqrt{ tk_2}$ converges for $k_2\to\infty$ 
in distribution to 
the point measure $\delta_r$ for $r\in C_N^B$ as above.
\end{proposition}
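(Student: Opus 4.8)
The plan is to read off the concentration as a Laplace-type asymptotics. By the scaling recalled just before the proposition, the law of $X_{t,(k_1,k_2)}/\sqrt{tk_2}$ does not depend on $t$ and carries the density $f_{1,k}(x)=const(k)\,g(x)\,e^{k_2W(x)}$ on $C_N^B$, where $g(x):=\prod_{i=1}^N x_i^{2k_1}$ is a fixed nonnegative weight \emph{independent} of $k_2$. I would regard this as a Gibbs-type measure $\mu_{k_2}$ at inverse temperature $k_2$ and show that it collapses onto the unique maximiser $r$ of $W$. Since the target is a point mass, it suffices to prove that for every $\varepsilon>0$ the escape probability $\mu_{k_2}\bigl(C_N^B\setminus B(r,\varepsilon)\bigr)$ tends to $0$ as $k_2\to\infty$; convergence in distribution to $\delta_r$ then follows. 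Writing $U:=B(r,\varepsilon)$, I would express this probability as the ratio
\[
\mu_{k_2}(C_N^B\setminus U)=\frac{\int_{C_N^B\setminus U} g(x)\,e^{k_2W(x)}\,dx}{\int_{C_N^B} g(x)\,e^{k_2W(x)}\,dx}
\]
and estimate numerator and denominator separately.

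For the denominator I would exploit the fact, noted right before the proposition, that $r$ lies in the support of the measure $\mathbf{1}_{C_N^B}\cdot g(x)\,dx$. Thus, although $g(r)=0$ because $r_N=0$, every neighbourhood of $r$ contains interior points carrying positive $g$-mass. Choosing an interior point $r'$ close to $r$ and a small ball $V\subset C_N^B\cap U$ around $r'$ on which simultaneously $g\ge c>0$ and, by continuity of $W$, $W\ge W(r)-\eta$, one obtains the lower bound $\int_{C_N^B}g\,e^{k_2W}\ge c\,|V|\,e^{k_2(W(r)-\eta)}$, where $\eta>0$ can be made arbitrarily small.

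For the numerator the key point is the strict gap $m:=\sup_{C_N^B\setminus U}W<W(r)$. This rests on the uniqueness of the maximiser together with the coercivity of $W$: the term $-\|x\|^2/2$ forces $W\to-\infty$ as $\|x\|\to\infty$, and the logarithmic repulsion forces $W\to-\infty$ as $x$ approaches a wall $\{x_i=x_j\}$, so the supremum over the closed set $C_N^B\setminus U$ is attained at an interior point of that set, where it is strictly below $W(r)$. Peeling off one fixed power of the integrand then gives $\int_{C_N^B\setminus U}g\,e^{k_2W}\le e^{(k_2-1)m}\int_{C_N^B}g\,e^{W}$, and the remaining integral $\int_{C_N^B}g\,e^W=\int_{C_N^B}\prod_i x_i^{2k_1}\,e^{-\|x\|^2/2}\prod_{i<j}(x_i^2-x_j^2)^2\,dx$ is finite, being a Gaussian weight times a factor of at most polynomial growth with no singularities. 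Combining the two bounds yields $\mu_{k_2}(C_N^B\setminus U)\le C\,e^{-k_2(W(r)-\eta-m)}$, and choosing $\eta<W(r)-m$ makes the exponent strictly negative, so the escape probability decays exponentially and the proposition follows.

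The main obstacle I anticipate is exactly that the weight $g$ vanishes at the maximiser $r$ (since $r_N=0$), which rules out the textbook Laplace estimate based on $g(r)>0$. The remedy is to anchor the denominator bound at a nearby interior point rather than at $r$ itself, which is legitimate precisely because $r$ belongs to the support of $g(x)\,dx$. A secondary technical point is establishing the strict inequality $m<W(r)$, i.e.\ ruling out escape of mass to infinity or to the reflecting walls; this is where the coercivity coming from $-\|x\|^2/2$ and the singular logarithmic terms of $W$ is essential.
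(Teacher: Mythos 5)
Your argument is correct, but it is not the route the paper takes. The paper explicitly skips a direct proof and obtains the proposition as an immediate consequence of the stronger CLT, Theorem \ref{clt-main-b-one-sided-general}: once $X_{t,(k_1,k_2)}/\sqrt{t}-\sqrt{k_2}\,r$ converges in distribution to a fixed (one-sided normal) law, dividing by $\sqrt{k_2}$ gives $X_{t,(k_1,k_2)}/\sqrt{tk_2}-r\to 0$, i.e.\ convergence to $\delta_r$. Your proof is instead the self-contained Laplace/Gibbs-measure estimate that the paper only gestures at with the phrase ``standard arguments from analysis.'' You handle the two genuinely delicate points correctly: since $g(r)=0$ (because $r_N=0$), you anchor the denominator bound at an interior point $r'$ near $r$, which is legitimate because $r$ lies in the support of $g\,dx$ and $W$ is continuous near $r$ (the coordinates of $r$ are pairwise distinct, so no logarithmic singularity occurs there); and you obtain the strict gap $m=\sup_{C_N^B\setminus U}W<W(r)$ from coercivity, the blow-up of $W$ at the walls $x_i=x_j$, upper semicontinuity, and the uniqueness of the maximiser from Lemma \ref{char-zero-D}. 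The peeling estimate $e^{k_2W}\le e^{(k_2-1)m}e^{W}$ combined with the finiteness of $\int_{C_N^B}g\,e^{W}$ (a $B$-type Macdonald--Mehta integral with $k_2=1$) then closes the argument. What your approach buys is independence from the much harder CLT together with an explicit exponential decay rate for the escape probabilities; what the paper's approach buys is brevity, at the price of making the LLN logically downstream of the CLT. One cosmetic point: the supremum $m$ need not be attained at an interior point of $C_N^B\setminus U$ --- it may sit on the face $x_N=0$ --- but only at a point off the singular walls, which is all your argument actually needs.
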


We here skip details of the proof, as this proposition follows immediately 
from the following associated CLT which is analog to the CLTs in the previous sections and in particular to 
Corollary \ref{clt-main-b-one-sided}. Notice that the proof of Theorem \ref{clt-main-b-one-sided-general} does not rely on
Proposition \ref{clt-main-b-one-sided-general--lln}

\begin{theorem}\label{clt-main-b-one-sided-general}
Consider the  Bessel processes $(X_{t,k})_{t\ge0}$ of type $B_N$ on $C_N^B$ with multiplicities $(k_1,k_2)$  which start in $0$.
Fix  $k_1\ge 0$ and $t>0$. Then, for $k_2\to\infty$,
$$\frac{X_{t,(k_1,k_2)}}{\sqrt t} -  \sqrt{k_2 }\cdot r$$
converges  in distribution to the ``one-sided normal distribution'' on the half space 
$H_N:=\{x\in\mathbb R^N: x_N\ge0\}$ which appears in Corollary \ref{clt-main-b-one-sided}.
\end{theorem}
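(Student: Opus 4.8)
The plan is to follow the proof of Theorem~\ref{clt-main-D} almost verbatim, the only genuinely new ingredient being the extra weight $\prod_{i=1}^N y_i^{2k_1}$ coming from $w_k^B$. By the Brownian space-time scaling of the kernels $K_{t,k}$ I first reduce to $t=1$. Since the processes start in $0$ we have $J_k^B(0,\cdot)=1$, so $X_{1,(k_1,k_2)}-\sqrt{k_2}\cdot r$ has on the shifted cone $C_N^B-\sqrt{k_2}\cdot r$ the density
$$c_{(k_1,k_2)}^B\,e^{-\|y+\sqrt{k_2}\,r\|^2/2}\prod_{i<j}\bigl((y_i+\sqrt{k_2}\,r_i)^2-(y_j+\sqrt{k_2}\,r_j)^2\bigr)^{2k_2}\prod_{i=1}^N(y_i+\sqrt{k_2}\,r_i)^{2k_1},$$
and no Bessel factor has to be estimated, which makes the situation simpler than in Sections~3 and~4. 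The decisive structural fact is that $r_N=0$ by Lemma~\ref{char-zero-D}: after the shift the constraint $x_N\ge0$ of $C_N^B$ becomes $y_N\ge0$, while the ordering constraints become vacuous as $k_2\to\infty$ because $r_1>\dots>r_{N-1}>0=r_N$. Hence the shifted cones increase to the half space $H_N$, which is the origin of the half-space support of the limit.

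I then split the density as $\tilde c_{k_2}\cdot h_{k_2}(y)$ as in Theorem~\ref{clt-main-D}. The Gaussian factor, the term $e^{-\sqrt{k_2}\langle y,r\rangle}$ and the interaction logarithms $\sum_{i<j}\ln(1+\cdots)$ are treated by the power series of $\ln(1+x)$; the terms of order $\sqrt{k_2}$ cancel by the equilibrium identity of Lemma~\ref{char-zero-D}(2) (including the coordinate $i=N$, where $r_N=0$ forces the corresponding terms to cancel in pairs), and the quadratic remainder assembles into $\tfrac12 y^\top\Sigma^{-1}y$ with exactly the matrix~(\ref{covariance-matrix-D}). The new weight is handled coordinatewise: for $i<N$ one writes $(y_i+\sqrt{k_2}\,r_i)^{2k_1}=(\sqrt{k_2}\,r_i)^{2k_1}\bigl(1+y_i/(\sqrt{k_2}\,r_i)\bigr)^{2k_1}$, where the constant goes into $\tilde c_{k_2}$ and the remaining factor tends to $1$ because $k_1$ is fixed; for $i=N$, since $r_N=0$, the factor is simply $y_N^{2k_1}$ and survives. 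Thus $h_{k_2}(y)\to y_N^{2k_1}\exp(-\tfrac12 y^\top\Sigma^{-1}y)$ pointwise on $H_N$. Because $s_{N,j}=0$ for $j\ne N$ in~(\ref{covariance-matrix-D}), the $N$-th coordinate decouples, so for $k_1=0$ the limit is precisely the folded Gaussian, i.e.\ the one-sided normal distribution of Corollary~\ref{clt-main-b-one-sided}.

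To pass from pointwise to weak convergence I would apply dominated convergence on $\mathbb R^N$, extending $h_{k_2}$ by $0$ off the shifted cone. By the Lagrange remainder of $\ln(1+x)$, as in~(\ref{ln-expansion-remainder-b1}), the interaction exponent is bounded above by its linear part, which cancels against $-\sqrt{k_2}\langle y,r\rangle$ by Lemma~\ref{char-zero-D}(2), leaving the factor $e^{-\|y\|^2/2}$; the weight contributes the $k_2$-uniform polynomial bound $y_N^{2k_1}\prod_{i<N}(1+|y_i|/r_i)^{2k_1}$ (using $\sqrt{k_2}\ge1$). This yields an integrable dominating function. For the constant $\tilde c_{k_2}$ one may either run the Stirling analysis of~(\ref{const-b}) in the present regime, where $\nu=k_1/k_2\to0$ and the factors $\Gamma(\tfrac12+k_1+(j-1)k_2)$ must be tracked, or, more economically, use that each $f_{(k_1,k_2)}^B$ is a probability density, so $\tilde c_{k_2}=\bigl(\int h_{k_2}\bigr)^{-1}$; dominated convergence with $f\equiv1$ then forces $\tilde c_{k_2}\to\bigl(\int_{H_N}g\bigr)^{-1}$ for $g(y):=y_N^{2k_1}\exp(-\tfrac12 y^\top\Sigma^{-1}y)$, whence $\int f\,f_{(k_1,k_2)}^B\to\int_{H_N}fg\big/\int_{H_N}g$ for all $f\in C_b(\mathbb R^N)$. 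The limit is therefore the normalized probability measure with density $\propto g$ on $H_N$, which is the claimed one-sided normal distribution.

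The step I expect to be the main obstacle is the bookkeeping of the normalizing constant, since here $\nu=k_1/k_2\to0$ rather than being fixed as in Section~3, so that the identity~(\ref{equality-F-B}) is no longer directly available and the Stirling expansion of~(\ref{const-b}) must be balanced against the $(\sqrt{k_2}\,r_i)^{2k_1}$ and $k_2$-power terms extracted from the weight. The normalization shortcut circumvents the explicit computation, but still requires both the uniform integrable bound above and the verification that the shifted cones genuinely exhaust $H_N$, so that no limiting mass is lost at the moving boundary.
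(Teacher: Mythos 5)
Your overall strategy is exactly the paper's: reduce to $t=1$ by scaling, shift by $\sqrt{k_2}\,r$, split the density into a $y$-independent constant times $h_{k_2}$, expand the logarithms and cancel the $O(\sqrt{k_2})$ terms via Lemma \ref{char-zero-D}(2), dominate by a Gaussian times a polynomial, observe that the shifted cones exhaust $H_N$ because $r_N=0$, and recover the normalization by testing against $f\equiv1$ rather than by Stirling bookkeeping. All of these steps are carried out correctly, and your pointwise limit $h_{k_2}(y)\to y_N^{2k_1}\exp(-\tfrac12 y^{\top}\Sigma^{-1}y)$ on $H_N$ is, as far as I can see, the right one.

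The problem is your final identification. The measure with density proportional to $g(y)=y_N^{2k_1}\exp(-\tfrac12 y^{\top}\Sigma^{-1}y)$ on $H_N$ is \emph{not} the one-sided normal distribution of Corollary \ref{clt-main-b-one-sided} unless $k_1=0$: since $s_{N,j}=0$ for $j\ne N$, that one-sided normal has density proportional to $\exp(-\tfrac12 y^{\top}\Sigma^{-1}y)$ on $H_N$ with no extra weight, and the factor $y_N^{2k_1}$ changes the marginal of the last coordinate from a half-Gaussian to a density proportional to $y_N^{2k_1}e^{-s_{N,N}y_N^2/2}$ on $[0,\infty[$. This weight cannot be normalized away: because $r_N=0$, the factor $(y_N+\sqrt{k_2}\,r_N)^{2k_1}=y_N^{2k_1}$ of $w_k^B$ is unaffected by the shift and survives identically for every $k_2$. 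You in fact acknowledge mid-proof that the folded Gaussian is obtained only ``for $k_1=0$'', and then contradict this in your closing sentence. For what it is worth, the paper's own proof disposes of the weight by writing it as $\prod_{i=1}^N\bigl((y_i+\sqrt{k_2}r_i)/(\sqrt{k_2}r_i)\bigr)^{2k_1}$ and asserting that this product tends to $1$; that manipulation is ill-defined for the factor $i=N$ (division by $\sqrt{k_2}\,r_N=0$), and it is precisely there that the $y_N^{2k_1}$ is lost. So your computation, which keeps the $i=N$ factor separate, appears to be the correct one, but it proves a modified statement (limit density $\propto y_N^{2k_1}\exp(-\tfrac12 y^{\top}\Sigma^{-1}y)$ on $H_N$) rather than the theorem as stated; you should either restrict to $k_1=0$ or restate the limit with the extra weight, and flag the discrepancy explicitly instead of asserting the two limits coincide.
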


\begin{proof} As in the preceding cases we  assume $t=1$.
The case  $k_1= 0$ is shown in Corollary \ref{clt-main-b-one-sided}.
 
The case $k_1> 0$ can be proved similar as in Theorem \ref{clt-main-D}. In fact,
the shifted random variables
 $X_{1,(k_1,k_2)}-\sqrt{k_2}\cdot r$ have  densities of the form
\begin{align}\label{d-density-detail}
f_{(k_1,k_2)}(y):=&\tilde c(k_1,k_2)\cdot \prod_{i=1}^N (y_i+\sqrt{k_2}\cdot r_i)^{2k_1}\cdot
 e^{-\|y+\sqrt{k_2}\cdot r \|^2/2}\notag\\
&\quad\quad\times
 exp\Bigl(2k_2 \sum_{i<j}\ln\bigl((y_i+\sqrt{k_2}\cdot r_i)^2- (y_j+\sqrt{k_2}\cdot r_j)^2\bigr)\Bigr)
\notag\\
=&c(k_1,k_2)\cdot \prod_{i=1}^N \Biggl(\frac{y_i+\sqrt{k_2}\cdot r_i}{\sqrt{k_2}\cdot r_i}\Biggr)^{2k_1}
\cdot h_{k_2}(y)
\end{align}
on the shifted cone $C_N^B-\sqrt{k_2}\cdot r$ 
with 
\begin{align}h_{k_2}(y):=  exp\Bigl(&-\|y\|^2/2 -\sqrt{k_2} \langle y,r\rangle + \notag\\
+&2k_2 \sum_{i<j}\Bigl(\ln\bigl(1+ \frac{y_i-y_j}{\sqrt{k_2}(r_i-r_j)}\bigr)+
\ln\bigl(1+ \frac{y_i+y_j}{\sqrt{k_2}(r_i+r_j)}\bigr)\Bigr)\Bigr)
\notag\end{align}
where $f_{(k_1,k_2)}(y)=0$ otherwise on $\mathbb R^N$ and where $\tilde c(k_1,k_2), c(k_1,k_2)>0$ are suitable 
constants.
 As in Theorem \ref{clt-main-D} we obtain that
\begin{equation}\label{summary-b3}
\lim_{k_2\to\infty}h_{k_2}(y)= exp\Bigl(-\frac{\|y\|_2^2}{2}-\sum_{i<j}\frac{(y_i-y_j)^2}{(r_i-r_j)^2} 
-\sum_{i<j}\frac{(y_i+y_j)^2}{(r_i+r_j)^2}\Bigr).
\end{equation}

On the other hand, 
$$\Biggl( \prod_{i=1}^N\frac{y_i+\sqrt{k_2}\cdot r_i}{\sqrt{k_2}\cdot r_i}\Biggr)^{2k_1}$$
tends to 1 for $k_2\to\infty$ and is bounded by an expression of the form
$$c(k_1) +d(k_1)(y_1y_2\ldots y_N)^{2k_1} $$
for all $y$ independent of  $k_2\ge1$ with suitable constants $c(k_1),d(k_1)>0$. 
 These (in $y$) polynomial bounds do not affect the fact that we still may apply 
dominated convergence theorem like in the proofs of 
  Theorems \ref{clt-main-b1}, \ref{clt-main-a}, and \ref{clt-main-D} due to the Gaussian bounds for
$h_{k_2}(y)$ in the proofs there. Moreover, as the shifted cones $C_N^B-\sqrt{k_2}\cdot r$ converge to
$\mathbb R^{N-1}\times [0,\infty[$ for $k_2\to\infty$ due to $r_N=0$,
we thus conclude that
 for all  $f\in C_b(\mathbb R^N)$,
\begin{align}\label{density-b-limit-3}
\lim_{k_2\to\infty}&\int_{C_N^B-\sqrt{k_2}\cdot r} f(y)\cdot 
\Biggl( \prod_{i=1}^N\frac{y_i+\sqrt{k_2}\cdot r_i}{\sqrt{k_2}\cdot r_i}\Biggr)^{2k_1}\cdot h_{k_2}(y)\> dy = 
 \\
=
& \int_{\mathbb R^{N-1}\times [0,\infty[} f(y) 
\> exp\Bigl(-\Bigl(\frac{\|y\|^2}{2}+\sum_{i<j}\frac{(y_i-y_j)^2}{(r_i-r_j)^2}
+\sum_{i<j}\frac{(y_i+y_j)^2}{(r_i+r_j)^2}
\Bigr)\Bigr) \> dy.\notag
\end{align}
If we look into this formula for $f\equiv 1$, we obtain that the corresponding normalization constants 
converge as needed for our CLT, and the theorem follows.
\end{proof}

\begin{remark}
It can be easily seen that the limit relation for Bessel functions of type B in Lemma \ref{asymptotic-bessel-B1}
is also available for $\nu=0$ and $N\ge2$. This shows that Theorem \ref{clt-main-b-one-sided-general}
can be extended from the starting point 0 to an arbitrary starting point $x\in C_N^B$.
 The details of the proof are the same as in the proofs of Theorems \ref{clt-main-b1}   and \ref{Theorem-b-2}.
\end{remark}

\end{document}